\newtheorem{lemma}[equation]{Lemma}
\newtheorem{prop}[equation]{Proposition}
\newtheorem{thm}[equation]{Theorem}
\newtheorem{cor}[equation]{Corollary}
\theoremstyle{definition}
\newtheorem{exmp}[equation]{Example}
\numberwithin{equation}{section}
\newcommand{\Z}{\mathbf{Z}}
\newcommand{\Q}{\mathbf{Q}}
\newcommand{\F}{\mathbf{F}}
\newcommand{\qt}[2]{#1 \backslash #2}
\newcommand{\SOodd}[2]{\operatorname{SO}_{#1}(#2)}
\newcommand{\SOeven}[3]{\operatorname{SO}^{#1}_{#2}(#3)}
\newcommand{\Spinodd}[2]{\operatorname{Spin}_{#1}(#2)}
\newcommand{\Spineven}[3]{\operatorname{Spin}^{#1}_{#2}(#3)}
\newcommand{\GL}[3]{\operatorname{GL}^{#1}_{#2}(\F_{#3})}
\newcommand{\SL}[3]{\operatorname{SL}^{#1}_{#2}(#3)}
\newcommand{\Lie}{\mathit{Lie}}
\newcommand{\Ob}[1]{\mathrm{Ob}(#1)}
\newcommand{\cat}[3]{\mathcal{#1}_{#2}^{#3}}  
\newcommand{\m}{morphism}
\newcommand{\pol}{polynomial}
\newcommand{\syl}[1]{Sylow $#1$-subgroup}
\newcommand{\we}{weighting}
\newcommand{\Mb}{M\"obius}
\newcommand{\Euc}{Euler characteristic}
\newcommand{\rchi}{\widetilde{\chi}}
\newcommand{\Syl}[2]{\operatorname{Syl}_{#1}(#2)}
\newcommand{\mynote}[1]{\noindent{\textcolor{red}{\textbf{[#1]}}}}
\newcommand{\wh}[1]{\widehat{#1}}
\title{ Euler characteristics and $p$-singular elements in finite groups}
\author{Jesper M.~M\o ller}
\address{Institut for Matematiske Fag\\
  Universitetsparken 5\\
  DK--2100 K\o benhavn}
\email{moller@math.ku.dk}
\urladdr{htpp://www.math.ku.dk/~moller}
\thanks{Supported by the Danish National Research Foundation through
  the Centre for Symmetry and Deformation (DNRF92)}
\subjclass[2010]{20B05} \keywords{Orbit category, Euler
  characteristic, Brown subgroup poset, $p$-singular element, finite
  group of Lie type}
\begin{document}
\date{\today}
\begin{abstract}
  We use the Euler characteristic of the orbit category of a finite
  group to establish equivalences between theorems of Frobenius and
  K.S. Brown and between theorems of Steinberg and L. Solomon.
\end{abstract}
\maketitle

\section{Introduction}
\label{sec:introduction}

Let $G$ be a finite group with unit element $e$, $p$ a prime
number and $|G|_p$ the $p$-part of the group order.  An
element of $G$ is {\em $p$-singular\/} if its order is a
power of $p$ \cite[Definition 40.2, \S 82.1]{cr}.  Write
\begin{equation*}
  G_p = \{ g \in G \mid g^{|G|_p} = e\} = \bigcup \Syl pG
\end{equation*}
for the set of all $p$-singular elements in $G$.  In other words, $G_p$
is the solution set in $G$ to the equation $X^{|G|_p}=1$ or the union of
all the Sylow $p$-subgroups of $G$.\@ A theorem of Frobenius from
$1907$, or even earlier,
\begin{equation*}
  n \mid |G| \implies n \mid |\{ g \in G \mid g^n=e \}|
\end{equation*}
contains as a
special case a basic fact about the number, $|G_p|$, of $p$-singular
group elements \cite{frobenius:1907, isaacsrobinson}
\cite[Corollary~41.11]{cr} \cite[11.2, Corollary~2]{serre77}.

\begin{thm}[Frobenius $1907$]
\label{thm:frobenius}
  $|G|_p \mid |G_p|$
\end{thm}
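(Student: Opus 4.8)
The plan is to view the statement as the prime-power case of Frobenius's general divisibility theorem: since $G_p=\{g\in G\mid g^{|G|_p}=e\}$ and $|G|_p$ is a prime power, ``$|G|_p\mid|G_p|$'' is precisely ``$n\mid|\{g\in G\mid g^n=e\}|$'' for the divisor $n=|G|_p$ of $|G|$. I would run a strong induction on $|G|$, writing $p^a=|G|_p$ and fixing $P\in\Syl pG$. The base and degenerate cases are immediate: if $a=0$ then $G_p=\{e\}$; and if $P\trianglelefteq G$ --- in particular if $G=P$ --- then $G/P$ is a $p'$-group, an element of $p$-power order must lie in $P$, and $|G_p|=|P|=p^a$ exactly.

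For the inductive step the first move is a double count: the pairs $(g,Q)$ with $Q\in\Syl pG$ and $g\in Q$ number $|\Syl pG|\cdot p^a$, so counting over the first coordinate instead gives $\sum_{g\in G_p}n(g)=|\Syl pG|\cdot p^a$, where $n(g)$ is the number of Sylow $p$-subgroups of $G$ containing $g$. Here $n(e)=|\Syl pG|$, and for $g\ne e$ the Sylow $p$-subgroups through $\gen g$ biject with those of $N_G(\gen g)$, so $n(g)=|\Syl p{N_G(\gen g)}|\equiv 1\pmod p$. Complementarily, one can let $P$ act on $G_p$ by conjugation and discard the free orbits --- each of size $p^a$, hence $\equiv 0\pmod{p^a}$ --- to get $|G_p|\equiv\bigl|\bigcup_{1\ne s\in P}(C_G(s)\cap G_p)\bigr|\pmod{p^a}$; inclusion--exclusion then rewrites the right side as a signed sum of centralizer counts $|C_G(R)\cap G_p|$ over nontrivial $R\le P$, each of which one would want to feed back into the induction.

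The main obstacle is the passage from ``mod $p$'' to ``mod $p^a$''. The double count pins down $\sum_g n(g)$ but not $|G_p|$, and the $n(g)$ are controlled only modulo $p$; dually, in the centralizer expansion the groups $C_G(R)$ need not contain a full Sylow $p$-subgroup of $G$, so the inductive hypothesis does not apply to $|C_G(R)\cap G_p|$ as stated. Overcoming this is exactly the content of Frobenius's $1907$ theorem; at that point I would either push through his argument (or the account in \cite{isaacsrobinson}), or --- in keeping with the outlook of the present paper --- try to bypass the case analysis entirely by reading the divisibility off a single computation, modulo $|G|_p$, of an Euler characteristic attached to $G$.
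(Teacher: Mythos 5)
The paper itself gives no proof of Theorem~\ref{thm:frobenius}: it is quoted as a classical result with references to Frobenius, Isaacs--Robinson, Curtis--Reiner and Serre, so there is no internal argument to compare yours against. Judged on its own terms, your proposal is not a proof but a plan with the decisive step missing, and you say so yourself. The two devices you set up each stall exactly where the theorem begins. The double count of pairs $(g,Q)$ with $Q\in\Syl pG$ and $g\in Q$ gives $\sum_{g\in G_p}n(g)=|\Syl pG|\cdot p^a$ with each $n(g)\equiv 1\pmod p$, which yields only $|G_p|\equiv 0\pmod p$; there is no mechanism in that computation for extracting divisibility by $p^a$, since the congruences $n(g)\equiv 1$ hold only modulo $p$. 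The second device, acting by $P$ on $G_p$ and expanding the non-free part by inclusion--exclusion over nontrivial $R\leq P$, founders because $C_G(R)$ need not contain a Sylow $p$-subgroup of $G$: the inductive hypothesis applied to $C_G(R)$ controls $|C_G(R)\cap G_p| = |(C_G(R))_p|$ only modulo $|C_G(R)|_p$, which may be strictly smaller than $p^a$, and the signed sum does not obviously recover the missing powers of $p$. (There is also a small overcount in that step: $(C_G(R))_p$ is the set of elements of $C_G(R)$ killed by $|C_G(R)|_p$, which need not coincide with $C_G(R)\cap G_p$ when $|C_G(R)|_p<|G|_p$, though for $p$-singular elements of $G$ lying in $C_G(R)$ the two sets do agree since such elements have $p$-power order; the real problem is the modulus, not the set.)

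Deferring the passage from $p$ to $p^a$ to ``Frobenius's argument'' or to \cite{isaacsrobinson} is an honest acknowledgment, but it means the proposal establishes nothing beyond the base cases ($a=0$, or $P\trianglelefteq G$) and the easy congruence $p\mid |G_p|$. The missing idea in the elementary proofs is a finer induction: Isaacs--Robinson, for instance, induct simultaneously on the exponent $p^b\leq p^a$ and on $|G|$, counting solutions of $x^{p^b}=e$ inside cosets of a suitable subgroup (or, in other treatments, one sums the counting function over all subgroups and inverts), and none of that bookkeeping is present here. The alternative you float at the end --- reading the divisibility off an Euler characteristic computation modulo $|G|_p$ --- is in fact what the paper does, but only in Proposition~\ref{thm:frobeniusbrown}, and there it is an \emph{equivalence} with Brown's theorem (Theorem~\ref{thm:brown}) via equation~\eqref{eq:chiOG}, not an independent proof: to derive Frobenius from that identity you must already know Brown's theorem, and vice versa. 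So that route does not rescue the proposal either unless you import Brown's theorem as an input.
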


The number of $p$-singular elements is known for the symmetric groups
and for the finite groups of Lie type in defining characteristic $p$:
\begin{itemize}
\item The exponential generating function for the  
  number of $p$-singular permutations in the symmetric groups $\Sigma_n$
is \cite[Example
  5.2.10]{stanley99}
    \begin{equation*}
      \sum_{n=1}^\infty  |(\Sigma_n)_p| \frac{x^n}{n!} =
      \exp ( x + \frac{x^p}{p} + \frac{x^{p^2}}{p^2}+ \cdots + 
      \frac{x^{p^m}}{p^m} + \cdots )
    \end{equation*}
  \item $|K_p| = |K|_p^2$   for a
    finite group $K$ of Lie type in defining characteristic $p$
    (Theorem~\ref{thm:steinberg}) 
\end{itemize}

The aim of this note is to relate Frobenius' theorem
(Theorem~\ref{thm:frobenius}) to a theorem of
K.S. Brown (Theorem~\ref{thm:brown}) and Steinberg's theorem
(Theorem~\ref{thm:steinberg}) to a theorem of L. Solomon
(Theorem~\ref{thm:solomon}). These two pairs of
theorems are linked by the \Euc\ of the orbit category discussed in 
Proposition~\ref{prop:GpEuc}.\eqref{item:GpEuc1}.

The following notation will be used in this note:

\begin{tabular}[h]{c|l}
  $G$ & a finite group \\
  $\cat SG{}$ & the poset of subgroups of $G$ ordered by inclusion, $H
  \leq K \iff H  \subseteq K$\\
  $\cat OG{}$ & the orbit category of subgroups of $G$, $\cat
                OG{}(H,K) = \{ g \in G \mid H^g \subseteq K\}/K$,
                $\cat OG{}(H) = N_G(H)/H$ \\
  $p$ & a prime number \\
  $O_p(G)$ & the biggest normal $p$-subgroup of $G$ \\
  $n_p$ &  the $p$-part of the natural number $n$ 
  \\
  $\cat C{}{}(a,b)$ & set of \m s from $a$ to $b$ in category $\cat
                      C{}{}$ \\
  $\cat C{}{}(a)$ & monoid $\cat C{}{}(a,a)$ of endo\m s of $a$ in
                    category $\cat C{}{}$ \\
  $q$ &  a prime power \\
  $\F_q$ &  the finite field with $q$ elements 
\end{tabular}

If $\cat C{G}{}$ is a category whose objects are all subgroups of
$G$, then $\cat C{G}{p}$,  $\cat C{G}{p+*}$,  $\cat C{G}{p+\mathrm{rad}}$ denotes
the full subcategory of $\cat C{G}{}$ on all $p$-subgroups, non-trivial
$p$-subgroups, $p$-radical $p$-subgroups, respectively. (A $p$-subgroup $H$
of $G$ is $p$-radical if $H=O_pN_G(H)$.)

\section{Using \Euc s to count $p$-singular elements}
\label{sec:using-euc-s}

We apply Tom
Leinster's theory of \Euc s of
finite  categories \cite{leinster08}  to the orbit
category $\cat OGp$.

Let $\zeta$ be a square matrix with rational coeffecients. A {\em
  \we\/} for $\zeta$ is a vector $k$ such that all coordinates of
$\zeta k$ equal $1$. A {\em co\we\/} is a \we\ for the transpose of
$\zeta$. The matrix $\zeta$ has \Euc\ if it admits both a \we\ and a
co\we , and the \Euc\ of $\zeta$, $\chi(\zeta)$, is then the coordinate
sum of a \we\ or a co\we\ \cite[Lemma~2.1,
Definition~2.2]{leinster08}. The \Euc\ of an invertible matrix is the
sum of the entries of the inverse.

Let $\cat C{}{}$ be a finite category. The $\zeta$-matrix of
$\cat C{}{}$ is the square matrix
$\zeta(\cat C{}{}) = \left( |\cat C{}{}(a,b)| \right)_{a,b \in
  \Ob{\cat C{}{}}}$ recording the cardinalities of all the \m\ sets in
$\cat C{}{}$. A \we\ or co\we\ for $\cat C{}{}$ is a \we\ or co\we\ for
$\zeta(\cat C{}{})$.  The \Euc\ of $\cat C{}{}$ is
$\chi(\cat C{}{}) = \chi(\zeta(\cat C{}{}))$ when $\cat C{}{}$ has a
\we\ and a co\we . The {\em reduced\/} \Euc\ of $\cat C{}{}$ is
$\rchi(\cat C{}{}) = \chi(\cat C{}{})-1$. The \Euc\ of any finite
category with an initial or terminal object is $1$.

The finite categories of this note all admit \we s and co\we s.
By {\em the\/} \we\ for e.g.\ $\cat OGp$ we mean the \we\ that is constant
on iso\m\ classes of objects \cite[p 3035]{gm:2012}. 

\begin{lemma}\label{lemma:Gp}
  The number of $p$-singular elements in $G$ is
  \begin{equation*}
    |G_p| =
    p^{-1} +  \sum_{1 \leq C\leq G}(1-p^{-1})|C|
  \end{equation*}
  where the sum is over all cyclic $p$-subgroups $C$ of $G$.
\end{lemma}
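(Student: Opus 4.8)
The plan is to partition $G_p$ according to the cyclic $p$-subgroup that each element generates. The map $g \mapsto \gen{g}$ sends $G_p$ onto the set of all cyclic $p$-subgroups of $G$, since $g$ is $p$-singular exactly when $\gen g$ is a $p$-group, and every cyclic $p$-subgroup is $\gen g$ for (at least) one $p$-singular $g$. The fibre of this map over a cyclic $p$-subgroup $C$ is precisely the set of generators of $C$. Hence
\begin{equation*}
  |G_p| = \sum_{C} \bigl| \{ g \in C \mid \gen g = C\} \bigr|,
\end{equation*}
the sum ranging over all cyclic $p$-subgroups $C \leq G$.

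Next I would count the generators of a cyclic $p$-group. The trivial subgroup has exactly one generator, namely $e$. A cyclic $p$-group $C$ of order $p^k$ with $k \geq 1$ has $\phi(p^k) = p^k - p^{k-1} = (1-p^{-1})p^k = (1-p^{-1})|C|$ generators, where $\phi$ is the Euler totient. Substituting these counts into the displayed sum and separating off the trivial subgroup gives
\begin{equation*}
  |G_p| = 1 + \sum_{1 < C \leq G}(1-p^{-1})|C|,
\end{equation*}
the remaining sum now being over the nontrivial cyclic $p$-subgroups of $G$.

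Finally I would rewrite the constant term so that the trivial subgroup can be absorbed back into the sum. Writing $1 = p^{-1} + (1-p^{-1}) = p^{-1} + (1-p^{-1})|1|$, where $|1|=1$ is the order of the trivial subgroup, we obtain
\begin{equation*}
  |G_p| = p^{-1} + (1-p^{-1})|1| + \sum_{1 < C \leq G}(1-p^{-1})|C|
        = p^{-1} + \sum_{1 \leq C \leq G}(1-p^{-1})|C|,
\end{equation*}
which is the claimed identity. The argument is essentially bookkeeping; the only point that needs care is the convention that the index set $\{1 \leq C \leq G\}$ in the final sum includes the trivial subgroup, which is exactly what produces the isolated $p^{-1}$ term after the $k=0$ contribution $(1-p^{-1})|1|$ is split off from the leading $1$.
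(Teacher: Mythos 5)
Your proposal is correct and follows the same route as the paper: partition $G_p$ by the cyclic $p$-subgroup generated, count generators ($1$ for the trivial subgroup, $(1-p^{-1})|C|$ otherwise), and absorb the trivial subgroup into the sum by splitting $1 = p^{-1} + (1-p^{-1})$. The bookkeeping of the $p^{-1}$ term is exactly the point the paper's proof makes as well.
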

\begin{proof}
  Declare two $p$-singular elements to be equivalent of they generate
  the same cyclic subgroup. The set of equivalence classes is the set
  of cyclic $p$-subgroups $C$ of $G$.  The number of elements in the
  equivalence class $C$ is the number of generators of $C$: $p^{-1}+(1-p^{-1})|C|$ if
  $|C|=1$ and $(1-p^{-1})|C|$ if $|C| > 1$. 
\end{proof}

The \we s for the poset $\cat SGp$ and the category $\cat OGp$,
$k^K_{\cat S{}{}}=-\rchi(\cat S{\cat OG{}(K)}{p+*})$ and
$k^K_{\cat O{}{}}=-\frac{1}{|G|}\rchi(\cat S{\cat OG{}(K)}{p+*})|K|$
\cite[Theorem 1.3]{jmm_mwj:2010}, vanish off the $p$-radical subgroups
by Quillen's \cite[Proposition 2.4]{quillen78}. Thus the \we s for
$\cat SGp$, $\cat OGp$ restrict to \we s for the full subcategories
$\cat SG{p+\mathrm{rad}}$, $\cat OG{p+\mathrm{rad}}$ and
$\chi(\cat SG{p+\mathrm{rad}}) = \chi(\cat SGp) = 1$,
$\chi(\cat OG{p+\mathrm{rad}}) = \chi(\cat OGp)$ \cite[Lemma~2.9]{jmm_mwj:2010}.

It can be more convenient to work with conjugacy classes of subgroups rather than the subgroups themselves.
Let $[\cat SG{p+\mathrm{rad}}]$ be the set of conjugcacy classes $[K]$ of $p$-radical subgroups $K$ of $G$.
Since  $|\cat OG{}(H,K)| =| (\qt KG)^H|$ is the
  mark of $H$ on the transitive right $G$-set $\qt KG$, 
the square matrix $[\cat OG{p+\mathrm{rad}}]$ with entries
\begin{equation}
  \label{eq:TOM}
  [\cat OG{p+\mathrm{rad}}]([H],[K]) = | \cat OG{}(H,K)|, \qquad
  [H],[K] \in[\cat SG{p+\mathrm{rad}}]
\end{equation}
is Burnside's {\em table of
  marks\/} \cite{burnside55}  for the  $p$-radical subgroup classes. It is easy to see that the category
$\cat OG{p+\mathrm{rad}}$ and the table of marks $[\cat OG{p+\mathrm{rad}}]$ have the same \Euc\  
  \cite[\S2.4]{jmm_mwj:2010}.

\begin{prop}\label{prop:GpEuc}
  Let $G$ be a finite group and $p$ a prime number.
  \begin{enumerate}
  \item \label{item:GpEuc1}$\displaystyle
   \sum\limits_{K \in \cat SG{p+\mathrm{rad}}}
  -\rchi(\cat S{\cat OG{}(K)}{p+*})|K| =     |G_p|
  $ 
\item 
  \label{item:GpEuc2}
    $\sum\limits_{K \in \cat SG{p+\mathrm{rad}}}
  -\rchi(\cat S{\cat OG{}(K)}{p+*}) = 1$
\item \label{item:GpEuc3}
    $\sum\limits_{[K] \in [\cat SG{p+\mathrm{rad}}]} -\rchi(\cat S{\cat OG{}(K)}{p+*}) \frac{|\cat OG{}(H,K)|}{|\cat OG{}(K)|}
  = 1$ for any $p$-radical subgroup $H$ 
  \end{enumerate}
   \end{prop}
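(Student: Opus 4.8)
The plan is to recognise each of the three sums as the coordinate sum of a canonical weighting (or as a weighting condition for a table of marks) and to evaluate it; the only genuinely new ingredient is an explicit coweighting of $\cat OGp$. Part~\eqref{item:GpEuc2} is purely formal: the summand $-\rchi(\cat S{\cat OG{}(K)}{p+*})$ is the value $k^K_{\cat S{}{}}$ of the canonical weighting of $\cat SGp$, this weighting vanishes off the $p$-radical subgroups, and the trivial subgroup is an initial object of $\cat SGp$; hence $\sum_{K\in\cat SG{p+\mathrm{rad}}}-\rchi(\cat S{\cat OG{}(K)}{p+*})=\sum_{K}k^K_{\cat S{}{}}=\chi(\cat SGp)=1$, which is the equality $\chi(\cat SG{p+\mathrm{rad}})=\chi(\cat SGp)=1$ recorded above.

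For part~\eqref{item:GpEuc1} I would first observe that $-\rchi(\cat S{\cat OG{}(K)}{p+*})|K|=|G|\,k^K_{\cat O{}{}}$ and that $k^{\bullet}_{\cat O{}{}}$ likewise vanishes off the $p$-radical subgroups, so the displayed sum equals $|G|\sum_{K}k^K_{\cat O{}{}}=|G|\,\chi(\cat OGp)$; thus \eqref{item:GpEuc1} is equivalent to $\chi(\cat OGp)=|G_p|/|G|$. Since $\cat OGp$ already possesses a weighting, it suffices to exhibit a coweighting whose coordinate sum is $|G_p|/|G|$. The candidate is $l_H=|G|^{-1}\#\{x\in H\mid\gen{x}=H\}$, the number of generators of $H$ divided by $|G|$ (so $l_H=0$ unless $H$ is cyclic), which is constant on conjugacy classes of $p$-subgroups. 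To check that $l$ is a coweighting, fix a $p$-subgroup $K$, rewrite $|\cat OG{}(H,K)|=\#\{gK\in G/K\mid H^g\subseteq K\}$, use the conjugation-invariance of $l$ to turn the resulting double sum inside out, and then use that $x\mapsto\gen{x}$ partitions the $p$-group $K$ into the generator-sets of its subgroups:
\[
  \sum_{H} l_H\,|\cat OG{}(H,K)|
  \;=\; \sum_{gK\in G/K}\;\sum_{H'\leq K} l_{H'}
  \;=\; \frac{|G|}{|K|}\cdot\frac{|K|}{|G|}
  \;=\; 1 .
\]
Consequently $\chi(\cat OGp)=\sum_{H}l_H$, the sum taken over all $p$-subgroups of $G$; since every $p$-singular element of $G$ generates exactly one cyclic $p$-subgroup this sum is $|G_p|/|G|$ (this is precisely the counting in Lemma~\ref{lemma:Gp}), and \eqref{item:GpEuc1} follows.

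Part~\eqref{item:GpEuc3} is the weighting condition for Burnside's table of marks $[\cat OG{p+\mathrm{rad}}]$, and I would derive it by grouping the weighting identity $\sum_{K}|\cat OG{}(H,K)|\,k^K_{\cat O{}{}}=1$ of the category $\cat OG{p+\mathrm{rad}}$ according to conjugacy classes of subgroups. The canonical weighting is constant on classes, and a short conjugation argument shows $|\cat OG{}(H,K^g)|=|\cat OG{}(H,K)|$ for every $g$; so the identity becomes $\sum_{[K]}\frac{|G|}{|N_G(K)|}\,|\cat OG{}(H,K)|\,k^K_{\cat O{}{}}=1$. Substituting $k^K_{\cat O{}{}}=-|G|^{-1}\rchi(\cat S{\cat OG{}(K)}{p+*})|K|$ and using $|\cat OG{}(K)|=|N_G(K)|/|K|$ replaces the coefficient of $|\cat OG{}(H,K)|$ by $-\rchi(\cat S{\cat OG{}(K)}{p+*})/|\cat OG{}(K)|$, which is exactly \eqref{item:GpEuc3}. (Alternatively one may simply quote the general identification of the weighting of $\cat OG{p+\mathrm{rad}}$ with the weighting of its table of marks.)

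The step I expect to require the most care is the verification that $l$ is a coweighting of $\cat OGp$ in part~\eqref{item:GpEuc1}, especially the bookkeeping with the conjugation that reindexes the inner sum over the $p$-subgroups $H$ with $H^g\subseteq K$ as a sum over the subgroups $H'$ of $K$. Everything else reduces to conjugacy-class bookkeeping together with the already-quoted formulas for the canonical weightings of $\cat SGp$ and $\cat OGp$, their vanishing off the $p$-radical subgroups, and Lemma~\ref{lemma:Gp}.
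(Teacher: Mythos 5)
Your proposal is correct, and its skeleton is the same as the paper's: each sum is recognised as a coordinate sum of the canonical weighting of $\cat OG{p+\mathrm{rad}}$, of $\cat SG{p+\mathrm{rad}}$, or as the weighting condition for the (modified) table of marks, using that these weightings vanish off the $p$-radical subgroups. The difference is in how much you prove versus cite. For \eqref{item:GpEuc1} the paper simply invokes \cite[Theorem 1.3.(4)]{jmm_mwj:2010} together with Lemma~\ref{lemma:Gp} to get $|G_p|=|G|\chi(\cat OG{p+\mathrm{rad}})$, whereas you re-derive that identity by exhibiting the explicit coweighting $l_H=|G|^{-1}\#\{x\in H\mid \gen{x}=H\}$ of $\cat OGp$ and checking the coweighting condition by the double-count $\sum_{H'\leq K}\#\{x\in H'\mid\gen{x}=H'\}=|K|$; this computation is correct (it is in effect the proof of the cited result) and makes the argument self-contained, at the cost of still needing the quoted formula for $k^K_{\cat O{}{}}$ to identify the summands of \eqref{item:GpEuc1} with $|G|k^K_{\cat O{}{}}$ in the first place. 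Similarly, for \eqref{item:GpEuc3} the paper quotes \cite[Proposition 2.14]{jmm_mwj:2010} relating $k^\bullet_{\cat O{}{}}$, $k^\bullet_{\cat S{}{}}$ and $k^\bullet_{[\cat O{}{}]}$, while you obtain the same identity by grouping the weighting condition $\sum_K|\cat OG{}(H,K)|k^K_{\cat O{}{}}=1$ over conjugacy classes and using $|\cat OG{}(H,K^g)|=|\cat OG{}(H,K)|$ and $|\cat OG{}(K)|=|N_G(K)|/|K|$; this is a legitimate direct derivation of the cited relation. Part \eqref{item:GpEuc2} is handled identically in both (the paper anchors $\chi=1$ at the least element $O_p(G)$ of $\cat SG{p+\mathrm{rad}}$, you at the initial object of $\cat SGp$; either works given the restriction of weightings already recorded before the proposition).
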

\begin{proof}
  Lemma~\ref{lemma:Gp} combined with \cite[Theorem 1.3.(4)]{jmm_mwj:2010} show that
  \begin{equation*}
    |G_p| = |G|\chi(\cat OG{p+\mathrm{rad}}) = \sum_{K \in \cat S{G}{p+\mathrm{rad}}} -\rchi(\cat S{\cat
      OG{}(K)}{p+*}) |K| 
  \end{equation*}
  where the sum ranges over all $p$-radical subgroups $K$ of $G$. This
  proves \eqref{item:GpEuc1}. Item \eqref{item:GpEuc2}  simply expresses that
  $\cat SG{p+\mathrm{rad}}$, with $O_p(G)$ as its least element
  \cite[Proposition~6.3]{gm:2012}, has \Euc\ equal to $1$.

  The \we ,
  $k^\bullet_{[\cat O{}{}]} \colon [\cat SG{p+\mathrm{rad}}] \to \Q$,
  for the table of marks of the $p$-radical subgroup classes
  \eqref{eq:TOM} satisfies
  \begin{equation*}
    \sum_{[K] \in [\cat SG{p+\mathrm{rad}}]} |\cat OG{}(H,K)| k^K_{[\cat O{}{}]}   = 1
  \end{equation*}
  for all $p$-radical subgroups $H$. The \we s, $k^\bullet_{\cat O{}{}}$ and
  $k^\bullet_{\cat S{}{}}$, for $\cat OG{p+\mathrm{rad}}$ and
  $\cat SG{p+\mathrm{rad}}$ are \cite[Proposition
  2.14]{jmm_mwj:2010}
\begin{equation*}
  k_{\cat O{}{}}^K = \frac{|N_G(K)|}{|G|}k_{[\cat O{}{}]}^{[K]}, \qquad
    k_{\cat S{}{}}^K = \frac{|G|}{|K|} k_{\cat O{}{}}^K = \frac{|G|}{|K|} \frac{|N_G(K)|}{|G|} k_{[\cat O{}{}]}^K =
    |\cat OG{}(K)| k_{[\cat O{}{}]}^K 
  \end{equation*}
  and therefore
    \begin{equation*}
    \sum_{[K] \in [\cat SG{p+\mathrm{rad}}]} \frac{|\cat OG{}(H,K)|}{|\cat OG{}(K)|} k^K_{\cat S{}{}}   = 1
  \end{equation*}
  which is the third item.
\end{proof}

Proposition~\ref{prop:GpEuc}.\eqref{item:GpEuc1} expresses that $|G_p|
= |G| \chi(\cat OG{p+\mathrm{rad}}) = |G| \chi([\cat
OG{p+\mathrm{rad}}])$ can be computed from the table of marks for the
$p$-radical subgroups \eqref{eq:TOM}.

The content of Proposition~\ref{prop:GpEuc}.\eqref{item:GpEuc3} is
that the  vector $(k_{\cat S{}{}}^K)_{K \in [\cat SG{p+\mathrm{rad}}]}$ is a \we\
for $[[\cat OG{p+\mathrm{rad}}]]$, {\em the modified table of
  marks},  defined to be the square matrix 
with entries  
\begin{equation}
  \label{eq:modTOM}
  [[\cat OG{p+\mathrm{rad}}]]([H],[K]) = \frac{| \cat OG{}(H,K)|}{|\cat OG{}(K)|}, \qquad
  [H],[K] \in[\cat SG{p+\mathrm{rad}}]
\end{equation}
In other words, $k^K_{\cat S{}{}} = k^K_{[[\cat O{}{}]]}$ for all
$p$-radical subgroups $K$ where $k^\bullet_{[[\cat O{}{}]]}$ is the
\we\ for $[[\cat OG{p+\mathrm{rad}}]]$.


The normalizer $N_G(K)$ acts on the transporter set
$N_G(H,K) =\{ g \in G \mid H^g \subseteq K\}$ and the orbit set
corresponds bijectively via the map $g \to K^{g^{-1}}$ to the set
$\{ L \in [K] \mid H \supseteq L\}$ of conjugates of $K$ containing
$H$. Thus the modified mark
\begin{equation*}
  \frac{|\cat OG{}(H,K)|}{|\cat OG{}(K)|} = |N_G(H,K)/N_G(K)| = |\{ L \in [K] \mid H \supseteq L\}|
\end{equation*}
is the number of $H$-supergroups conjugate to $K$.

\begin{exmp}[$G=\Sigma_4$, $p=2$]  
  The $2$-radical subgroup classes of the symmetric group $\Sigma_4$ are the
  \syl 2 $D_8$  and $O_2(\Sigma_4)=C_2 \times C_2$ of
  order $4$. The table of marks \eqref{eq:TOM} and the modified table of marks \eqref{eq:modTOM} for the
  $2$-radical subgroup classes in $\Sigma_4$ are
  \begin{equation*}
    [\cat O{\Sigma_4}{2+\mathrm{rad}}] =
    \begin{pmatrix}
      1 & 0 \\ 3 & 6
    \end{pmatrix}, \qquad
    [[\cat O{\Sigma_4}{2+\mathrm{rad}}]] =
    \begin{pmatrix}
      1 & 0 \\ 3 & 1
    \end{pmatrix}
  \end{equation*}
  The \we\ for the table of marks is $k_{[\cat O{}{}]} = (1,-1/3)$ and
  $|\Sigma_4| \chi([\cat O{\Sigma_4}{2+\mathrm{rad}}]) = 24(1-1/3) =
  16$ is the number of $2$-singular elements in $\Sigma_4$ in
  agreement with Proposition~\ref{prop:GpEuc}.\eqref{item:GpEuc1}. The
  modified table of marks has \we\ $k_{[[\cat O{}{}]]} = (1,-2)$ which
  by Proposition~\ref{prop:GpEuc}.\eqref{item:GpEuc3} means that
  $k^{D_8}_{\cat S{}{}} = 1$ and
  $k^{C_2 \times C_2}_{\cat S{}{}} = -2$. Since the subgroups $D_8$,
  $C_2 \times C_2$ have lengths $3$, $1$, the \Euc\ of the Brown poset
  $\cat S{\Sigma_4}{2+\mathrm{rad}}$ is
  $\chi(\cat S{\Sigma_4}{2+\mathrm{rad}}) = 3 \cdot 1 + 1 \cdot (-2)
  =1$ in agreement with
  Proposition~\ref{prop:GpEuc}.\eqref{item:GpEuc2}.
\end{exmp}

\section{The theorems of Frobenius and Brown for finite groups are equivalent}
\label{sec:brown}

The following theorem was proved by K.S. Brown \cite{brown75}
(and reproved by Quillen \cite[Corollary
4.2]{quillen78}, Webb \cite[Theorem 8.1]{webb87} and others).

\begin{thm}[Brown $1975$]  \label{thm:brown}
  $|G|_p \mid \rchi(\cat SG{p+*})$
\end{thm}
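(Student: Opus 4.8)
The plan is to derive Brown's theorem from the material already set up — specifically from Frobenius' theorem (Theorem~\ref{thm:frobenius}) together with Proposition~\ref{prop:GpEuc}.\eqref{item:GpEuc1} — since the whole point of the section title is that the two statements are equivalent. The bridge is the formula
\[
  |G_p| = \sum_{K \in \cat SG{p+\mathrm{rad}}} -\rchi(\cat S{\cat OG{}(K)}{p+*})\,|K|,
\]
and the auxiliary fact that the weights $-\rchi(\cat S{\cat OG{}(K)}{p+*})$ are exactly the weights $k^K_{\cat S{}{}}$ for the Brown poset $\cat SGp$, so that $\rchi(\cat SG{p+*}) = \chi(\cat SGp) - 1 = \sum_{1 < H \le G, H \text{ a } p\text{-group}} (\text{weight contributions})$ collapses, after Quillen's vanishing off non-$p$-radical subgroups, to a sum over $p$-radical subgroups as well.

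**First I would** unwind $\rchi(\cat SG{p+*})$ in terms of these weights. By definition of the weight $k^K_{\cat S{}{}} = -\rchi(\cat S{\cat OG{}(K)}{p+*})$ and the fact (quoted from \cite{jmm_mwj:2010}, via Quillen \cite[Proposition 2.4]{quillen78}) that these weights vanish unless $K$ is $p$-radical, one gets
\[
  \rchi(\cat SG{p+*}) \;=\; -1 + \chi(\cat SGp) \;=\; \sum_{e < K,\ K \le G\ p\text{-group}} k^K_{\cat S{}{}} \cdot (\text{mult.}) \;=\; \sum_{e \ne K \in \cat SG{p+\mathrm{rad}}} -\rchi(\cat S{\cat OG{}(K)}{p+*}),
\]
and I would isolate the $K = O_p(G)$ term. **Then** I would compare this with $|G_p|$. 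The key observation is a congruence: $|K| \equiv 0 \pmod{|G|_p}$ for every non-trivial $p$-radical $K$ that strictly contains nothing forced, but more precisely, since $p$-radical subgroups need not have order divisible by $|G|_p$, the right tool is instead to run Frobenius' theorem through the identity directly. **The hard part** — and the step I expect to be the real obstacle — is bookkeeping the trivial/$O_p(G)$ term: in Proposition~\ref{prop:GpEuc}.\eqref{item:GpEuc1} the sum includes $K = O_p(G)$ (always $p$-radical), contributing $-\rchi(\cat S{\cat OG{}(O_p(G))}{p+*})\,|O_p(G)|$, and one must check that modulo $|G|_p$ this term, together with the "$p^{-1}$" fudge appearing in Lemma~\ref{lemma:Gp}, is accounted for correctly.

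**Concretely**, I would argue as follows. Set $m = |G|_p$. Frobenius (Theorem~\ref{thm:frobenius}) gives $m \mid |G_p|$. By Proposition~\ref{prop:GpEuc}.\eqref{item:GpEuc1}, $|G_p| = \sum_K k^K_{\cat S{}{}}\,|K|$ over $p$-radical $K$. For every $p$-radical subgroup $K$ with $|K| = |G|_p$ (i.e. $K$ a Sylow $p$-subgroup) the term is $m \cdot k^K_{\cat S{}{}}$, which is $\equiv 0 \pmod m$ provided $k^K_{\cat S{}{}} \in \Z$; and one knows $k^{\syl p{}}_{\cat S{}{}} = 1$. For $K$ of smaller order, $|K|$ is a proper power of $p$, hence $|K| \cdot k^K_{\cat S{}{}}$ need not be divisible by $m$ individually — so instead I would clear denominators: multiply the identity of \eqref{item:GpEuc2}, $\sum_K k^K_{\cat S{}{}} = 1$, by $m$ and subtract a suitable multiple, so that
\[
  |G_p| - m = \sum_{K \in \cat SG{p+\mathrm{rad}}} k^K_{\cat S{}{}}\,(|K| - m),
\]
wait — that subtracts $m \cdot 1 = m$, and now each summand has $|K| - m$ which is $\equiv 0 \pmod{?}$ only when $|K|=m$. **So the genuinely efficient route** is: observe that $\rchi(\cat SG{p+*}) = \sum_K k^K_{\cat S{}{}}$ (sum over $p$-radical $K$, using \eqref{item:GpEuc2} this sum is $1$ — which is wrong for Brown unless we exclude $O_p(G)$) — hence the correct statement must be that $\rchi(\cat SGp) $ versus $\rchi(\cat SG{p+*})$ differ precisely by the poset having versus not having a least element, and the numerator $|G|_p \mid \rchi(\cat SG{p+*})$ follows from $|G_p| \equiv |G| \cdot [\text{something}] $. **The main obstacle**, then, is pinning down exactly how the "$p^{-1}$" correction term in Lemma~\ref{lemma:Gp} and the presence of $O_p(G)$ as least element in $\cat SG{p+\mathrm{rad}}$ conspire so that the integrality $|G|_p \mid \rchi(\cat SG{p+*})$ is equivalent to $|G|_p \mid |G_p|$; I would resolve it by writing $|G_p| = p^{-1} + (1-p^{-1})\sum_{1\le C\le G}|C|$ over cyclic $p$-subgroups, relating $\sum|C|$ to an Euler-characteristic-weighted sum over the poset, and checking that the Möbius/weight identity turns the divisibility statements into one another — at which point one invokes Frobenius in one direction and reads off Brown in the other.
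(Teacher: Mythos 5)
Your starting point coincides with the paper's: Brown's theorem is to be extracted from Frobenius' theorem (Theorem~\ref{thm:frobenius}) through the identity of Proposition~\ref{prop:GpEuc}.\eqref{item:GpEuc1}. But the proposal stalls at precisely the obstacle you flag yourself --- the coefficient $|K|$ multiplying the weight $k^K_{\cat S{}{}}$ is a $p$-power that need not be divisible by $|G|_p$, so termwise divisibility fails --- and none of the escape routes you sketch closes it. Subtracting $|G|_p$ times the identity of \eqref{item:GpEuc2} leaves factors $|K|-|G|_p$ that are still not divisible by $|G|_p$; the identification $\rchi(\cat SG{p+*})=\chi(\cat SGp)-1$ is false (the right-hand side is $0$, since $\cat SGp$ contains the trivial subgroup as an initial object --- the correct relation is that $-\rchi(\cat SG{p+*})$ is the \emph{weight} $k^1_{\cat S{}{}}=-\rchi(\cat S{\cat OG{}(1)}{p+*})$ of the trivial subgroup); and the final appeal to Lemma~\ref{lemma:Gp} and an unspecified ``M\"obius/weight identity'' is not an argument.

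The two missing ideas are these. First, regroup the sum in Proposition~\ref{prop:GpEuc}.\eqref{item:GpEuc1} by conjugacy classes: the class $[H]$ has $|G:N_G(H)|$ members, so it contributes $-\rchi(\cat S{\cat OG{}(H)}{p+*})\cdot|G|/|\cat OG{}(H)|$ rather than a $p$-power times a weight. Isolating the trivial class, whose contribution is exactly $-\rchi(\cat SG{p+*})$, gives
\[
  |G_p| \;+\; \rchi(\cat SG{p+*}) \;+\; \sum_{[H]\neq 1}
  \frac{\rchi(\cat S{\cat OG{}(H)}{p+*})}{|\cat OG{}(H)|_p}\,
  \frac{|G|}{|\cat OG{}(H)|_{p'}} \;=\; 0 .
\]
Second, induct on $|G|$: Brown's theorem applied to the strictly smaller groups $\cat OG{}(H)=N_G(H)/H$, $H\neq 1$, makes each quotient $\rchi(\cat S{\cat OG{}(H)}{p+*})/|\cat OG{}(H)|_p$ an integer, while $|G|/|\cat OG{}(H)|_{p'}$ is an integer divisible by $|G|_p$ because $|\cat OG{}(H)|_{p'}$ divides $|G|_{p'}$. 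Hence every term of the sum is divisible by $|G|_p$, Frobenius gives $|G|_p\mid |G_p|$, and $|G|_p\mid\rchi(\cat SG{p+*})$ follows. Without the conjugacy-class regrouping and the induction on the subquotients $N_G(H)/H$, the divisibility genuinely fails term by term, exactly as you observed, so the proposal as written does not prove the theorem.
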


It was observed in \cite{brown-thevenaz88, HIO89} that M\"obius
functions link the theorems of Frobenius and Brown.  We here note
that also Proposition~\ref{prop:GpEuc}.\eqref{item:GpEuc1} connects the
two theorems.

\begin{prop}\label{thm:frobeniusbrown}
  Theorems~\ref{thm:frobenius} and \ref{thm:brown} are
  equivalent  given  Proposition~\ref{prop:GpEuc}.\eqref{item:GpEuc1}. 
\end{prop}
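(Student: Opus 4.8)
The plan is to squeeze out of Proposition~\ref{prop:GpEuc}.\eqref{item:GpEuc1} the congruence $|G_p|\equiv-\rchi(\cat SG{p+*})\pmod{|G|_p}$, after which the two divisibilities $|G|_p\mid|G_p|$ (Theorem~\ref{thm:frobenius}) and $|G|_p\mid\rchi(\cat SG{p+*})$ (Theorem~\ref{thm:brown}) become literally the same assertion. First I would reduce both theorems to groups with $O_p(G)=1$. For Theorem~\ref{thm:frobenius} this is elementary: $G_p$ is exactly the preimage under $G\to G/O_p(G)$ of $\bigl(G/O_p(G)\bigr)_p$, so $|G_p|=|O_p(G)|\cdot|(G/O_p(G))_p|$, and since also $|G|_p=|O_p(G)|\cdot|G/O_p(G)|_p$, Theorem~\ref{thm:frobenius} for $G$ is equivalent to Theorem~\ref{thm:frobenius} for the strictly smaller group $G/O_p(G)$, whose $O_p$ is trivial. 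For Theorem~\ref{thm:brown}, if $O_p(G)\ne1$ the poset $\cat SG{p+*}$ is contractible via the conical contraction $H\mapsto HO_p(G)$ (one has $H\le HO_p(G)\ge O_p(G)$ in $\cat SG{p+*}$), hence $\rchi(\cat SG{p+*})=0$ and Theorem~\ref{thm:brown} holds for $G$ automatically. So it suffices to establish the equivalence for groups with $O_p(G)=1$.

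Assume then $O_p(G)=1$. Now $1=O_p(G)$ is the least $p$-radical subgroup and $\cat OG{}(1)=G$, so splitting off the term $K=1$ in Proposition~\ref{prop:GpEuc}.\eqref{item:GpEuc1} gives
\begin{equation*}
  |G_p| + \rchi(\cat SG{p+*}) = \sum_{1<K} -\rchi(\cat S{\cat OG{}(K)}{p+*})\,|K|,
\end{equation*}
the sum ranging over the $p$-radical subgroups $K\ne1$ of $G$. I claim the right-hand side is divisible by $|G|_p$, as soon as Theorem~\ref{thm:brown} is known for each of the smaller groups $\cat OG{}(K)=N_G(K)/K$. Gather the summands by conjugacy class: the class of $K$ contains $[G:N_G(K)]$ subgroups, each contributing equally, for a class total $[G:N_G(K)]\cdot\bigl(-\rchi(\cat S{\cat OG{}(K)}{p+*})\,|K|\bigr)$. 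Theorem~\ref{thm:brown} for $N_G(K)/K$ gives $|N_G(K)/K|_p\mid\rchi(\cat S{\cat OG{}(K)}{p+*})$, so $|N_G(K)|_p=|K|\cdot|N_G(K)/K|_p$ divides $\rchi(\cat S{\cat OG{}(K)}{p+*})\,|K|$; and $[G:N_G(K)]\cdot|N_G(K)|_p=|G|_p\cdot[G:N_G(K)]_{p'}$ is a multiple of $|G|_p$. Hence every class contributes a multiple of $|G|_p$, which proves the claim and yields $|G_p|\equiv-\rchi(\cat SG{p+*})\pmod{|G|_p}$.

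Finally I would package the two implications by induction on $|G|$. For the direction Theorem~\ref{thm:brown}$\Rightarrow$Theorem~\ref{thm:frobenius}, assume Theorem~\ref{thm:brown} for all finite groups; by the reduction it is enough to treat $G$ with $O_p(G)=1$, and then the congruence combined with $|G|_p\mid\rchi(\cat SG{p+*})$ forces $|G|_p\mid|G_p|$. For the direction Theorem~\ref{thm:frobenius}$\Rightarrow$Theorem~\ref{thm:brown}, assume Theorem~\ref{thm:frobenius} for all finite groups; reduce to $O_p(G)=1$, observe that every $N_G(K)/K$ with $1\ne K$ radical has order $<|G|$, so Theorem~\ref{thm:brown} for these groups is available by induction, feed this into the congruence, and conclude $|G|_p\mid\rchi(\cat SG{p+*})$ from $|G|_p\mid|G_p|$. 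The step I expect to be decisive is the passage to conjugacy classes: a term-by-term argument fails because a $p$-radical subgroup $K$ may have $|N_G(K)|_p<|G|_p$, and it is only the identity $[G:N_G(K)]\cdot|N_G(K)|_p=|G|_p\cdot[G:N_G(K)]_{p'}$, which becomes usable only after summing over a whole class, that restores divisibility by $|G|_p$.
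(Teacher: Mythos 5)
Your proof is correct and follows essentially the same route as the paper: isolate the trivial subgroup's term in Proposition~\ref{prop:GpEuc}.\eqref{item:GpEuc1}, group the remaining terms by conjugacy classes so that each class contributes $-\rchi(\cat S{\cat OG{}(K)}{p+*})\,|G|/|\cat OG{}(K)|$, a multiple of $|G|_p$ once Brown's theorem is available for the smaller groups $N_G(K)/K$, and then run both implications through the resulting congruence $|G_p|\equiv-\rchi(\cat SG{p+*})\pmod{|G|_p}$ by induction on $|G|$. Your explicit preliminary reduction to $O_p(G)=1$ is a point the paper leaves implicit (its equation isolating the trivial subgroup tacitly uses that either $1$ is $p$-radical or $\rchi(\cat SG{p+*})=0$), and is a worthwhile refinement rather than a different method.
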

\begin{proof}
  Proposition~\ref{prop:GpEuc}.\eqref{item:GpEuc1} may be rewritten on
  the form
\begin{equation}
  \label{eq:chiOG}
    |G_p| + 
    \rchi(\cat SG{p+*}) + \sum_{[H] \neq 1} 
   \frac{\rchi(\cat S{\cat OG{}(H)}{p+*})}{|\cat OG{}(H)|_p}
   \frac{|G|}{|\cat OG{}(H)|_{p'}} = 0
 \end{equation}
where we have isolated the contribution from the trivial
  subgroup and the sum is over classes of non-trivial $p$-radical
  subgroups of $G$.

  Assume first that Theorem~\ref{thm:frobenius} holds.  In
  Equation~\eqref{eq:chiOG}, we may assume that
  \begin{itemize}
  \item $\rchi(\cat S{\cat OG{}(H)}{p+*})/ |\cat OG{}(H)|_p$ is an integer
  when $H$ is nontrivial (as part of an inductional argument)
\item $|G|/|\cat OG{}(H)|_{p'}$ is an integer divisible by $|G|_p$ (as
  $|\cat OG{}(H)|$ divides $|G|$)
  \end{itemize} 
  Thus every term in the sum is divisible by $|G|_p$ and so is $|G_p|$
  by assumption.  We conclude that $\rchi(\cat SG{p+*})$ is divisible
  by $|G|_p$ and we have arrived at Theorem~\ref{thm:brown}.

 Assume next that Theorem~\ref{thm:brown} holds.  In
  Equation~\eqref{eq:chiOG}
  \begin{itemize}
  \item $|G|/|\cat OG{}(H)|_{p'}$ is an integer divisible by $|G|_p$
\item $\rchi(\cat S{\cat OG{}(H)}{p+*})/|\cat OG{}(H)|_p$ is an integer
\item $\rchi(\cat SG{p+*})$ is divisible by $|G|_p$
\end{itemize}  
and thus $|G|_p$ divides for $|G_p|$. This is 
Theorem~\ref{thm:frobenius}.
\end{proof}

\section{The theorems of Solomon and Steinberg 
  for finite groups of Lie type are equivalent}
\label{sec:finite-groups-lie}

Let $\Sigma$ be a reduced and crystallographic root system with
fundamental and positive roots $\Pi, \Sigma^+ \subseteq \Sigma$
\cite[Definition 1.8.1] {GLSIII}. Suppose
$\overline{K}(\Sigma)$ is a semisimple
$\overline{\F}_p$-algebraic group with root system $\Sigma$
\cite[Theorem 1.10.4]{GLSIII} equipped with a (standard form) Steinberg endo\m\ $\sigma$
\cite[Definition~1.15.(b), Remarks~2.2.5.(e)]{GLSIII}.  Assuming $\Sigma$ to be also
irreducible \cite[Definition 1.8.4]{GLSIII}, let
$K=O^{p'}C_{\overline{K}(\Sigma)}(\sigma)$ be the finite group in
$\Lie(p)$ with $\sigma$-setup $(\overline{K}(\Sigma),\sigma)$
\cite[Definition~2.2.2]{GLSIII}.

The number of $p$-singular elements in $K$ was determined by Steinberg
\cite[15.2]{steinberg68}. 

\begin{thm}[Steinberg $1968$]\label{thm:steinberg}
  $|K_p| = |K|_p^2$
\end{thm}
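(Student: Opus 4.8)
The plan is to count $p$-singular elements in $K$ by exploiting the $BN$-pair structure of the finite group of Lie type $K$ and the Bruhat decomposition. First I would recall the key facts: $K$ acts on its building, which is homotopy equivalent to a wedge of spheres, and by the Solomon--Tits theorem the reduced Euler characteristic of the building equals $\pm q^N$ where $q = |K|_p$ is the order of a Sylow $p$-subgroup $U$ (the unipotent radical of a Borel subgroup $B = UT$) and $N = |\Sigma^+|$. Here the relevant observation is that $U \in \Syl pK$, so $|K|_p = |U| = q^N$ and thus $|K|_p^2 = q^{2N}$ is exactly what we must produce.

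The main computational tool is the formula from Lemma~\ref{lemma:Gp} combined with the structure of parabolic subgroups. Every $p$-subgroup of $K$ is conjugate to a subgroup of $U$, and the $p$-radical $p$-subgroups of $K$ are, up to conjugacy, the unipotent radicals $U_J$ of the standard parabolic subgroups $P_J = N_K(U_J)$ indexed by subsets $J \subseteq \Pi$, with $N_K(U_J)/U_J = L_J$ a Levi complement (again of Lie type in characteristic $p$, but with the crucial feature that $O_p(L_J) = 1$). Then I would apply Proposition~\ref{prop:GpEuc}.\eqref{item:GpEuc1}:
\begin{equation*}
  |K_p| = \sum_{J \subseteq \Pi} -\rchi\bigl(\cat S{L_J}{p+*}\bigr)\, |U_J|,
\end{equation*}
where the sum is over $W_J$-orbits appropriately, or equivalently runs over the $|K/P_J|$ conjugates. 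Since $O_p(L_J)=1$, the Brown poset $\cat S{L_J}{p+*}$ of $L_J$ is nonempty for $J \subsetneq \Pi$ and its reduced Euler characteristic is governed by Brown's theorem; for $J = \Pi$ we have $L_\Pi = K/U$... wait, rather $P_\Pi = K$, $U_\Pi = 1$, $L_\Pi = K$, contributing the $\rchi(\cat SKp{+*})$ term.

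The cleanest route, and the one I would actually carry out, is to reduce Theorem~\ref{thm:steinberg} to Solomon's theorem (Theorem~\ref{thm:solomon}, whose statement is promised next in the paper) via the same mechanism as in Proposition~\ref{thm:frobeniusbrown}: rewrite Proposition~\ref{prop:GpEuc}.\eqref{item:GpEuc1} in the form of Equation~\eqref{eq:chiOG}, specialized to $G = K$. The non-trivial $p$-radical subgroups of $K$ are exactly the unipotent radicals $U_J$ with $J \subsetneq \Pi$, for which $\cat OK{}(U_J) = L_J$ has $|L_J|_p = |U \cap L_J|$ and the "defect" $|K|/|L_J|_{p'} = |K/P_J| \cdot q^{N - \dim U_J}$ is exactly divisible by appropriate powers of $q$. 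Feeding in the inductive hypothesis that Steinberg's identity (equivalently the precise value of $\rchi(\cat S{L_J}{p+*})$, which is Solomon's alternating sum over parabolics of $L_J$) holds for each proper Levi $L_J$, the sum telescopes: Solomon's theorem says precisely that $\rchi(\cat SKp{+*}) = \pm q^N = \pm|K|_p$ up to sign, and each term $-\rchi(\cat S{L_J}{p+*})|U_J|$ times its multiplicity $|K/P_J|$ contributes so that the total is $q^{2N}$.

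I expect the main obstacle to be bookkeeping the signs and the multiplicities correctly: one must verify that $\rchi(\cat S{L_J}{p+*}) = (-1)^{|\Pi \setminus J|} q^{|\Sigma^+| - |\Sigma_J^+|}$ (the Solomon--Tits value for $L_J$) with the right sign convention, and that summing $\sum_{J \subseteq \Pi} (-1)^{|\Pi \setminus J|}\, |K/P_J|\, q^{|\Sigma^+|} = q^{2N}$, i.e.\ that $\sum_J (-1)^{|\Pi\setminus J|} |K/P_J| = q^N$ — this last identity is itself essentially Solomon's theorem / the Solomon--Tits computation of the Steinberg module dimension. So the "hard part" is not any single estimate but rather recognizing that Steinberg's $|K_p| = |K|_p^2$ and Solomon's alternating-sum identity are two faces of the Euler characteristic $|K|\chi(\cat OK{p+\mathrm{rad}})$, exactly parallel to how Frobenius and Brown were linked in Proposition~\ref{thm:frobeniusbrown}; once that dictionary is set up the equivalence is formal, and the absolute value of either side is pinned down by the Solomon--Tits theorem that the building of $K$ is spherical with top homology of rank $q^N$.
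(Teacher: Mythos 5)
Your route --- Borel--Tits to identify the $p$-radical classes of $K$ with the unipotent radicals $U_J$, Proposition~\ref{prop:GpEuc}.\eqref{item:GpEuc1} to write $|K_p|=\sum_J -\rchi(\cat S{L_J}{p+*})\,|K:P_J|\,|U_J|$, the Solomon--Tits value of $\rchi(\cat S{L_J}{p+*})$, and Solomon's alternating sum --- is exactly the paper's derivation: it is the proof of Theorem~\ref{thm:gensteinberg} specialized to $P=K$ (Steinberg's theorem itself is only quoted from \cite{steinberg68}, but the later argument reproves it). The one slip is your displayed value $\rchi(\cat S{L_J}{p+*})=(-1)^{|\Pi\setminus J|}q^{|\Sigma^+|-|\Sigma_J^+|}$: the correct statement is $-\rchi(\cat S{L_J}{p+*})=(-1)^{|J|}|L_J|_p=(-1)^{|J|}q^{|\Sigma_J^+|}$ (Corollary~\ref{cor:chiLJ}), so that $-\rchi(\cat S{L_J}{p+*})\,|U_J|=(-1)^{|J|}|P_J|_p=(-1)^{|J|}|K|_p$ and the sum collapses to $|K|_p\sum_J(-1)^{|J|}|K:P_J|=|K|_p^2$ by Theorem~\ref{thm:solomon}; since you flag this bookkeeping yourself and your final alternating sum implicitly uses the correct exponent, the argument goes through as in the paper.
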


The surjections $\Sigma \to \widetilde{\Sigma} \to \wh{\Sigma}$ of
\cite[(2.3.1)]{GLSIII} induce surjections
$\Pi \to \widetilde{\Pi} \to \wh{\Pi}$ of sets.  Here,
$\widetilde{\Sigma}$ is the twisted root system of $K$ \cite[p
41]{GLSIII}, and $\wh{\Sigma} =\widetilde{\Sigma}/\!\!\sim$ is the set of equivalence classes of
twisted roots pointing in the same direction. If $K$ is an untwisted group of Lie type
\cite[Definition 2.2.4]{GLSIII}, $\Sigma = \widetilde{\Sigma} = \wh\Sigma$.



For every subset $J \subseteq \wh{\Pi}$ we have associated subgroups
$P_J,U_J,L_J \subseteq K$ such that $U_J=O_pP_J$, $P_J=N_K(U_J)$ and
$P_J = U_J \rtimes L_J$ \cite[Theorem~2.6.5]{GLSIII}. The $P_J$ are
parabolic subgroups, the $U_J$ are unipotent $p$-radical subgroups and the $L_J$
are Levi complements \cite[Definition~2.6.4,
Definition~2.6.6]{GLSIII}.  The extreme cases where $J=\emptyset,\wh\Pi$
are, $P_\emptyset = U_\emptyset \rtimes L_\emptyset$, where
$P_\emptyset = B$ is a Borel subgroup of $K$, $U_\emptyset=U$ a Sylow
$p$-subgroup \cite[p 41, Theorems 2.3.4, 2.3.7]{GLSIII} and
$L_\emptyset = H$ is a maximal torus or Cartan subgroup \cite[Theorem
2.4.7, Definition 2.4.12]{GLSIII}, and
$P_{\wh \Pi} = K = L_{\wh \Pi}$, $U_{\wh\Pi} = 1$.

The following \pol\ identity dates back to L. Solomon
\cite[Corollary~1.1]{solomon66} \cite[Theorem~9.4.5, \S
14]{carter:lie}.

\begin{thm}[Solomon $1966$]\label{thm:solomon}
    $\sum\limits_{J \subseteq \widehat{\Pi}}
    (-1)^{|J|}|P_{\widehat{\Pi}} : P_J| = |K|_p$
  \end{thm}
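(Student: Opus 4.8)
The plan is to interpret the alternating sum as an Euler characteristic computation over the poset of parabolic subgroups, exploiting the combinatorics of the building of $K$ together with the fact that the overgroups of the Borel subgroup $B=P_\emptyset$ are exactly the $P_J$, indexed by subsets $J\subseteq\wh\Pi$, with $P_J\subseteq P_{J'}$ iff $J\subseteq J'$. Thus $J\mapsto P_J$ is a poset isomorphism from the Boolean lattice $2^{\wh\Pi}$ onto the lattice of parabolic subgroups containing $B$. Since $|P_{\wh\Pi}:P_J|=|K:P_J|$ is the number of $K$-conjugates of $P_J$, and since two such parabolics $P_J^g$, $P_{J}^{g'}$ agree iff $g'g^{-1}\in P_J$, the quantity $|K:P_J|$ counts the $K$-conjugates of the standard parabolic of type $J$; equivalently it counts the simplices of type $J$ in the (spherical) Tits building $\Delta=\Delta(K)$ associated to $K$. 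Hence
\begin{equation*}
  \sum_{J\subseteq\wh\Pi}(-1)^{|J|}|K:P_J|
  \;=\;\sum_{J\subseteq\wh\Pi}(-1)^{|J|}\,\bigl|\{\text{simplices of }\Delta\text{ of cotype }J\}\bigr|,
\end{equation*}
and after the sign bookkeeping $(-1)^{|J|}=(-1)^{|\wh\Pi|}(-1)^{\dim\text{(simplex)}}$ this is, up to the global sign $(-1)^{|\wh\Pi|}$, the reduced Euler characteristic of $\Delta$ shifted by the contribution of the empty simplex (the term $J=\wh\Pi$, giving $|K:K|=1$).

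Next I would invoke the Solomon--Tits theorem: the building $\Delta$ of the finite group $K$ of Lie type in characteristic $p$ has the homotopy type of a wedge of spheres of dimension $|\wh\Pi|-1$, and the number of these spheres equals $|U|=|K|_p$, the order of a Sylow $p$-subgroup. (This can be seen directly by shelling the building, or via the standard apartment-by-apartment argument counting chambers not meeting a fixed chamber, which gives exactly $q^N=|U|$ top-dimensional generators.) Consequently $\rchi(\Delta)=(-1)^{|\wh\Pi|-1}|K|_p$, i.e. $\chi(\Delta)=1+(-1)^{|\wh\Pi|-1}|K|_p$. Translating back through the sign convention above, and taking care that the simplicial-chain Euler characteristic $\sum_{\emptyset\neq J}(-1)^{|\wh\Pi|-|J|}(\#\text{cotype-}J\text{ simplices})$ matches $\chi(\Delta)$ while the $J=\wh\Pi$ term supplies the ``$+1$'', the cross terms cancel and one is left with $\sum_{J\subseteq\wh\Pi}(-1)^{|J|}|K:P_J|=|K|_p$, which is the assertion.

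The alternative I would actually write up, to stay self-contained and in the spirit of this note, is the purely combinatorial Möbius-inversion proof on the Boolean lattice $2^{\wh\Pi}$, avoiding topology altogether: consider the $K$-set $\qt BK$ of cosets of the Borel, decompose it according to the type of the parabolic it ``generates'' with a fixed chamber, and observe that the number of cosets $Bg$ with $Bg\cdot B$ landing in a cell of type exactly $J$ is governed by the Bruhat decomposition, so that $|K:B|=\sum_{J}(\text{\#elements of the Weyl group }W\text{ with descent set contained in }J)\cdot(\text{\ldots})$; then the inclusion--exclusion over $J$ collapses the double coset count $\sum_{w\in W}q^{\ell(w)}$ to the single term $q^{N}=|U|=|K|_p$ coming from the longest element, after the parabolic indices $|P_{\wh\Pi}:P_J|=\sum_{w\in W_J\backslash W}q^{\ell(w)}$ are substituted in. The main obstacle in either route is the same: pinning down that the ``leading'' contribution is exactly $q^{N}=|U|$ and that everything else cancels — in the topological proof this is precisely the content of Solomon--Tits (a wedge of spheres with the right count), and in the combinatorial proof it is the identity $\sum_{J\subseteq\wh\Pi}(-1)^{|\wh\Pi\setminus J|}\sum_{w\in W^J}q^{\ell(w)}=q^{N}$, which is the Poincaré-polynomial form of inclusion--exclusion on $W$ keyed by right descent sets. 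I expect to devote the bulk of the argument to that cancellation, treating the poset isomorphism $2^{\wh\Pi}\cong\{\text{parabolics}\supseteq B\}$ and the identification of $|K:P_J|$ with a marking as routine consequences of \cite[Theorem~2.6.5]{GLSIII} and the definitions already recalled.
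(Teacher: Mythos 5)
The note does not actually prove Theorem~\ref{thm:solomon}: it quotes it from \cite[Corollary~1.1]{solomon66} and \cite[Theorem~9.4.5, \S 14]{carter:lie} and uses it as an \emph{input}, so you are supplying an argument where the paper supplies a citation. Your second, combinatorial route is essentially the cited proof: since $P_{\wh\Pi}=K$ and $|K:P_J|$ is the Poincar\'e polynomial $\sum_{w\in W^J}q^{\ell(w)}$ of the minimal coset representatives, interchanging the sums gives $\sum_{w}q^{\ell(w)}\sum_{J\cap D(w)=\emptyset}(-1)^{|J|}$, and the inner sum vanishes unless the descent set $D(w)$ is all of $\wh\Pi$, which singles out the longest element and leaves $q^{N}=|U|=|K|_p$. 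That is Solomon's argument. Your first route (identify the alternating sum with $(-1)^{|\wh\Pi|}(1-\chi(\Delta))$ for the Tits building $\Delta$ and quote Solomon--Tits) is also mathematically valid, but it runs against the grain of this note, which goes in the opposite direction: Corollary~\ref{cor:chiLJ} \emph{derives} a weak Solomon--Tits statement from Solomon's identity, so taking Solomon--Tits as input would trivialize that corollary; it is also the heavier tool, since the wedge-of-spheres statement with the sphere count $|U|$ already contains the Euler-characteristic identity you are after.

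Two concrete gaps remain before either sketch is a proof. First, the cancellation that you yourself identify as ``the bulk of the argument'' is only named, and the identity you write for it, $\sum_{J}(-1)^{|\wh\Pi\setminus J|}\sum_{w\in W^J}q^{\ell(w)}=q^{N}$, carries the wrong sign when $|\wh\Pi|$ is odd if $W^J$ denotes the representatives with $\ell(ws)>\ell(w)$ for all $s\in J$; with that convention the correct weight is $(-1)^{|J|}$, exactly as in the theorem (a similar slip occurs in your simplex-dimension bookkeeping, where $(-1)^{|J|}=(-1)^{|\wh\Pi|+1}(-1)^{\dim}$ for a cotype-$J$ simplex). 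Second, the theorem is asserted for every $K\in\Lie(p)$, including the twisted groups, where $|K:P_J|$ is \emph{not} literally $\sum_{w\in W^J}q^{\ell(w)}$: the Bruhat cells contribute monomials $q_w$ in several parameters attached to the orbits of roots under the symmetry. The argument survives --- the alternating sum still isolates the longest element of the twisted Weyl group, and $q_{w_0}=|U|=|K|_p$ --- but that is precisely the content of \cite[\S 14]{carter:lie} and needs to be said rather than absorbed into the untwisted notation.
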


  Solomon's theorem, essentially a statement about
  reflection groups, generalizes to the following two identities that
  are \Mb\ inverses to each other.

  \begin{cor}\label{cor:solomon}
    For any subset $I$ of $\wh\Pi$,  
  $
    \sum\limits_{I \supseteq J} (-1)^{|J|} |P_I : P_J| = |L_I|_p$ and
    $\sum\limits_{I \supseteq J} (-1)^{|J|} |P_I : P_J| |L_J|_p = 1$.
  \end{cor}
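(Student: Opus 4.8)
The plan is to obtain the first identity by applying Solomon's theorem (Theorem~\ref{thm:solomon}) to the Levi complement $L_I$, which is again a (reductive) group of Lie type, and to obtain the second from the first by M\"obius inversion over the Boolean lattice of subsets of $\wh{\Pi}$ --- in other words, to make precise the assertion that the two identities are M\"obius inverses of each other.

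First I would record the reduction that transports the Solomon sum for $K$, indexed by $J\subseteq\wh{\Pi}$, down to one for $L_I$, indexed by $J\subseteq I$. For $J \subseteq I \subseteq \wh{\Pi}$ we have $P_J \subseteq P_I$ (a larger parameter set gives a larger parabolic) and $U_I \subseteq U_J$ (a larger parameter set gives a smaller unipotent radical), hence $U_I \subseteq P_J$. Since $U_I = O_p P_I \trianglelefteq P_I$ and $P_I = U_I \rtimes L_I$, the Dedekind modular law gives $P_J = P_I \cap P_J = U_I(L_I \cap P_J) = U_I \rtimes (L_I \cap P_J)$, so that
\begin{equation*}
  |P_I : P_J| = |L_I : L_I \cap P_J| .
\end{equation*}
By the structure theory of parabolics \cite[Theorem~2.6.5]{GLSIII}, $L_I$ is a group of Lie type with a $BN$-pair whose standard parabolic subgroups are exactly the $L_I \cap P_J$ for $J \subseteq I$, in bijection with the subsets of $I$. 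Applying Solomon's theorem to $L_I$ --- when the root system carried by $I$ is reducible, $L_I$ is a central torus times a product of smaller groups of Lie type and the identity holds factorwise, since both $|\,\cdot\,|_p$ and parabolic indices are multiplicative --- yields
\begin{equation*}
  \sum_{J \subseteq I}(-1)^{|J|}|P_I : P_J| = \sum_{J \subseteq I}(-1)^{|J|}|L_I : L_I \cap P_J| = |L_I|_p ,
\end{equation*}
which is the first identity. (For $I = \wh{\Pi}$ this is Theorem~\ref{thm:solomon} itself, and for $I = \emptyset$ it reads $1 = |H|_p$.)

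For the second identity I would feed the first one back into the sum. Substituting $|L_J|_p = \sum_{J' \subseteq J}(-1)^{|J'|}|P_J : P_{J'}|$ into $\sum_{J \subseteq I}(-1)^{|J|}|P_I : P_J|\,|L_J|_p$, interchanging the order of summation, and using $|P_I : P_J|\,|P_J : P_{J'}| = |P_I : P_{J'}|$, the inner alternating sum $\sum_{J' \subseteq J \subseteq I}(-1)^{|J|}$ collapses to $(-1)^{|I|}$ when $J' = I$ and to $0$ otherwise, leaving the single surviving term $|P_I : P_I| = 1$. Equivalently, writing $|P_I:P_J| = |K:P_J|/|K:P_I|$, the first identity says that $h(I) := |K:P_I|\,|L_I|_p$ equals the down-set sum $\sum_{J\subseteq I}(-1)^{|J|}|K:P_J|$ over the Boolean lattice, and M\"obius inversion recovers $|K:P_I| = \sum_{J\subseteq I}(-1)^{|J|}|K:P_J|\,|L_J|_p$, which after dividing by $|K:P_I|$ is the claim.

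I expect the main obstacle to be the structural input of the second paragraph rather than the combinatorial bookkeeping: one must be sure that passing from $K$ to $L_I$ genuinely reproduces the parabolic/$BN$-pair combinatorics indexed by the subsets of $I$ --- that $L_I\cap P_J$ is the standard parabolic of $L_I$ attached to $J$ and that $|P_I:P_J| = |L_I : L_I\cap P_J|$ --- and that Solomon's identity is still available for $L_I$ once its root system is no longer irreducible. With those facts in hand, both identities of Corollary~\ref{cor:solomon} follow formally.
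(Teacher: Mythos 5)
Your proposal is correct and follows exactly the route the paper intends but leaves implicit: the paper offers no written proof of Corollary~\ref{cor:solomon}, merely asserting that Solomon's theorem ``generalizes'' to the first identity and that the two identities are M\"obius inverses of each other. Your reduction $|P_I:P_J|=|L_I:L_I\cap P_J|$ via the Levi decomposition, the multiplicativity remark handling reducible $I$, and the explicit M\"obius inversion over the Boolean lattice of subsets of $I$ supply precisely the details the paper omits.
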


  We use a consequence of the Borel--Tits theorem \cite[Theorem~3.1.3]{GLSIII} to
  determine the modified table of marks for the $p$-radical subgroups
  of $K$.
  
\begin{lemma}\label{lemma:PIPJ}
  The modified table of marks \eqref{eq:modTOM} for the $p$-radical
  subgroups, $U_I$, $I \subseteq \wh\Pi$, of $K$ has entries
  \begin{equation*}
   [[\cat OK{p+\mathrm{rad}}]](U_I,U_J) =
   \begin{cases}
    |P_I : P_J|    & I \supseteq J \\ 0 & \text{otherwise}     
   \end{cases}
  \end{equation*}
  for all subsets $I,J \subseteq \wh\Pi$.
\end{lemma}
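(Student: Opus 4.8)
The plan is to identify the $p$-radical subgroups of $K$ and compute the modified marks using the combinatorial description established just before the statement, namely that
\[
  \frac{|\cat OK{}(U_I,U_J)|}{|\cat OK{}(U_J)|} = |\{ L \in [U_J] \mid U_I \supseteq L\}|
\]
is the number of $U_I$-supergroups that are $K$-conjugate to $U_J$. First I would invoke the Borel--Tits theorem \cite[Theorem~3.1.3]{GLSIII}: every $p$-radical subgroup of $K$ equals $O_p P$ for some parabolic subgroup $P$, hence is conjugate to exactly one $U_J$, $J \subseteq \wh\Pi$ (distinct $J$ give non-conjugate $U_J$ because $P_J = N_K(U_J)$ and the $P_J$ represent distinct conjugacy classes of parabolics by the standard parabolic theory in \cite[Theorem~2.6.5]{GLSIII}). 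So the $p$-radical subgroup classes are indexed bijectively by subsets of $\wh\Pi$, and the modified table of marks is the square matrix indexed by these subsets.

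The key step is then: for fixed $I, J \subseteq \wh\Pi$, count the conjugates of $U_J$ contained in $U_I$. Since $U_I = O_p P_I$ and $P_I = N_K(U_I)$, a subgroup $L = U_J^{g}$ lies in $U_I$ iff $U_J^{g} \subseteq O_p P_I$; I would argue that $U_J^{g} \subseteq U_I$ forces $U_J^g$ to be a unipotent $p$-radical subgroup of the parabolic $P_I$, and then apply the same parabolic/Borel--Tits analysis inside $P_I$ (equivalently inside the Levi quotient $L_I$, whose parabolic subgroups correspond to subsets $J' \subseteq I$). The upshot is that the conjugates of $U_J$ inside $U_I$ are exactly the $P_I$-conjugates of $U_J$ when $J \subseteq I$, and there are none when $J \not\subseteq I$. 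Counting $P_I$-conjugates of $U_J$ gives $|P_I : N_{P_I}(U_J)| = |P_I : P_I \cap P_J| = |P_I : P_J|$, using $N_K(U_J) = P_J \supseteq P_I \cap P_J$ and that $P_I \cap P_J = P_{I \cap J} = P_J$ when $J \subseteq I$ (the standard fact that an intersection of standard parabolics containing $B$ is the standard parabolic on the intersection of the index sets). This yields the stated formula.

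The main obstacle I expect is the claim that $U_J^g \subseteq U_I$ implies $U_J^g$ is $P_I$-conjugate to $U_J$ — i.e.\ that passing to the overgroup $P_I$ does not produce "extra" conjugates of $U_J$ that are not already $P_I$-conjugate to the standard one. This is where Borel--Tits is doing real work: $U_J^g$, being a $p$-radical subgroup of $K$ contained in $U_I \le P_I$, is $p$-radical in $P_I$ as well (one must check $O_p N_{P_I}(U_J^g) = U_J^g$, which follows since $N_{P_I}(U_J^g) \le N_K(U_J^g)$ and $U_J^g$ is already $p$-radical in $K$), hence by Borel--Tits applied to the group $P_I$ (whose $p$-radical subgroups are the unipotent radicals of its parabolics, indexed by $J' \subseteq I$) it is $P_I$-conjugate to some $U_{J'}$ with $J' \subseteq I$; and $P_I$-conjugacy is finer than $K$-conjugacy only up to fusion, so $J' = J$. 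I would spell this reduction out carefully, possibly phrasing it in the Levi quotient $L_I \cong P_I/U_I$ where the bookkeeping is cleanest, and then the counting of $P_I$-conjugates is routine.
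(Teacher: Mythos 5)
There is a genuine gap, and it sits exactly where you predicted the main obstacle would be. First, a bookkeeping problem: since $U_\emptyset=U$ is a Sylow $p$-subgroup and $U_{\wh\Pi}=1$, the assignment $J\mapsto U_J$ is order-\emph{reversing}, so $J\subseteq I$ gives $P_J\subseteq P_I$ but $U_J\supseteq U_I$. The modified mark $[[\cat OK{p+\mathrm{rad}}]](U_I,U_J)=|N_K(U_I,U_J)|/|N_K(U_J)|$ counts the conjugates of $U_J$ that \emph{contain} $U_I$ (via $g\mapsto U_J^{g^{-1}}$; the inclusion in the displayed formula you quote is reversed relative to the surrounding prose about ``supergroups''). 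Your key step instead counts conjugates of $U_J$ \emph{contained in} $U_I$; for $J\subsetneq I$ that set is empty on order grounds ($|U_J|>|U_I|$), yet you identify it with the $P_I$-orbit of $U_J$, of size $|P_I:P_J|>1$. The endpoint $|P_I:N_{P_I}(U_J)|=|P_I:P_J|$ is the right count of the right set --- the $P_I$-orbit of $U_J$ does consist of conjugates of $U_J$ containing $U_I$, since $P_I$ normalizes $U_I$ and $U_J\supseteq U_I$ --- but the argument in between does not compute the quantity the lemma is about.

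Second, the mechanism you propose for the reduction is false. A proper subgroup $R<U_I=O_p(P_I)$ is never $p$-radical in $P_I$: since $U_I\trianglelefteq P_I$, the subgroup $N_{U_I}(R)=N_{P_I}(R)\cap U_I$ is a normal $p$-subgroup of $N_{P_I}(R)$ strictly containing $R$ (normalizers grow in $p$-groups), so $O_p(N_{P_I}(R))\supsetneq R$. Hence ``$U_J^g$ is $p$-radical in $K$, therefore $p$-radical in $P_I$'' does not hold, and Borel--Tits cannot be applied inside $P_I$ in the way you describe. What is actually needed is a rigidity statement: if $g$ conjugates $U_I$ into some $U_J$ then $U_I^g=U_I$, whence $I\supseteq J$ and $g\in N_K(U_I)=P_I$; this gives $N_K(U_I,U_J)=P_I$ for $I\supseteq J$ and $N_K(U_I,U_J)=\emptyset$ otherwise, so the modified mark is $|P_I|/|P_J|=|P_I:P_J|$. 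The paper derives this rigidity from the fact that $\{U_I\mid I\subseteq\wh\Pi\}$ is the Alperin--Goldschmidt conjugation family controlling fusion in $U$ \cite[Corollary~3.16, Theorem~2.6.7]{GLSIII}. Your opening appeal to Borel--Tits for the classification of the $p$-radical classes is fine; it is this fusion-control input that your proposal does not actually supply.
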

\begin{proof}
  By \cite[Corollary~3.16]{GLSIII}, assisted by
  \cite[Theorem~2.6.7]{GLSIII} to show that any parabolic subgroup
  containing $U$ contains $B$, the set
  $\{U_I \mid I \subseteq \wh\Pi\}$ is the Alperin--Goldschmidt
  conjugation family \cite[Theorem~16.1]{GLSII} controlling fusion in
  $U$.
  It follows that if $g \in K$ conjugates $U_I$ into some $U_J$
  then $U_I^g = U_I$ so that $I \supseteq J$ and $g \in N_K(U_I)=P_I$
  \cite[Theorem~2.6.5]{GLSIII}. This shows that the transporter set
  $N_K(U_I,U_J)$ equals $N_K(U_I)$ in case $I$ contains $J$ and is
  empty otherwise.
\end{proof}

This leads to a  weak version of the
Solomon--Tits theorem \cite[Corollary~7.3]{curtis-lehrer-tits80}. 

\begin{cor}\label{cor:chiLJ}
  $-\rchi(\cat S{L_I}{p+*}) = (-1)^{|I|} |L_I|_p$ for all $I \subseteq \wh\Pi$.
\end{cor}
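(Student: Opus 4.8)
The plan is to read Corollary~\ref{cor:chiLJ} off the uniqueness of the weight for the modified table of marks of the $p$-radical subgroups of $K$, which Lemma~\ref{lemma:PIPJ} has just computed. Set $a_I = -\rchi(\cat S{L_I}{p+*})$ and $b_I = (-1)^{|I|}|L_I|_p$ for $I \subseteq \wh\Pi$; the goal is $a_I = b_I$ for all $I$.

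First I would note that $\{U_I \mid I \subseteq \wh\Pi\}$ is a set of representatives for the conjugacy classes of $p$-radical subgroups of $K$ (this is the consequence of Borel--Tits already used in the proof of Lemma~\ref{lemma:PIPJ}), and that distinct subsets give non-conjugate subgroups, since $U_I$ determines $N_K(U_I)=P_I$ and two standard parabolics are conjugate only when equal. Next, $\cat OK{}(U_I) = N_K(U_I)/U_I = P_I/U_I \cong L_I$ by \cite[Theorem~2.6.5]{GLSIII}, so the coordinate of the weight for $\cat SK{p+\mathrm{rad}}$ at $U_I$ is $k^{U_I}_{\cat S{}{}} = -\rchi(\cat S{\cat OK{}(U_I)}{p+*}) = a_I$. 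Substituting the entries of Lemma~\ref{lemma:PIPJ} into Proposition~\ref{prop:GpEuc}.\eqref{item:GpEuc3} with $H = U_I$ then yields
\begin{equation*}
  \sum_{J \subseteq I} |P_I : P_J|\, a_J = 1 \qquad \text{for every } I \subseteq \wh\Pi,
\end{equation*}
while the second identity of Corollary~\ref{cor:solomon}, rewritten with $b_J = (-1)^{|J|}|L_J|_p$, says
\begin{equation*}
  \sum_{J \subseteq I} |P_I : P_J|\, b_J = 1 \qquad \text{for every } I \subseteq \wh\Pi.
\end{equation*}

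Thus $(a_I)$ and $(b_I)$ are both solutions of the linear system whose matrix is $[[\cat OK{p+\mathrm{rad}}]]$. Ordering the subsets of $\wh\Pi$ by cardinality makes this matrix lower triangular with every diagonal entry equal to $|P_I:P_I| = 1$, hence invertible, so the system has a unique solution and $a_I = b_I$ for all $I$, which is the assertion. (Equivalently one inducts on $|I|$: for $I=\emptyset$ the equation reads $a_\emptyset = 1 = b_\emptyset$ since $L_\emptyset$ is a $p'$-group, and the inductive step subtracts from both equations the terms with $J \subsetneq I$, which agree by induction.)

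The only genuinely delicate point is the bookkeeping around these identifications: one must be certain that Lemma~\ref{lemma:PIPJ} describes the \emph{entire} modified table of marks — all $p$-radical classes, each appearing once — so that Proposition~\ref{prop:GpEuc}.\eqref{item:GpEuc3} applies verbatim, and that the isomorphism $\cat OK{}(U_I) \cong L_I$ is carried through correctly so that the weight coordinate really is $-\rchi(\cat S{L_I}{p+*})$. Once that is pinned down, the corollary is a purely formal consequence of the uniqueness of weights together with Corollary~\ref{cor:solomon}; no further input about root systems or buildings is required.
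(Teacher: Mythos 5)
Your proposal is correct and is essentially the paper's own argument: both read the corollary off Proposition~\ref{prop:GpEuc}.\eqref{item:GpEuc3} together with the second identity of Corollary~\ref{cor:solomon}, using Lemma~\ref{lemma:PIPJ} to identify the coefficients $|P_I:P_J|$ as the modified marks. The only difference is that you spell out the uniqueness-of-weighting step (triangularity of the matrix) that the paper leaves implicit.
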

\begin{proof}
  This follows immediately from
  Proposition~\ref{prop:GpEuc}.\eqref{item:GpEuc3} and the second
  identity of Corollary~\ref{cor:solomon} as the coefficients
  $|P_I : P_J|$ are the the modified marks by Lemma~\ref{lemma:PIPJ}.
\end{proof}

\begin{exmp}
  The group $\GL {}3{2} \in \Lie(2)$ has fundamental roots
  $\Pi=\{\alpha_1,\alpha_2\}$. Its parabolic subgroups, $P_\emptyset$,
  $P_{\{\alpha_1\}}$, $P_{\{\alpha_2\}}$, $P_{\Pi}$, have orders
  $8, 8 \cdot 3, 8 \cdot 3, 8 \cdot 21$ and Levi complements, $1$,
  $\GL{}22$, $\GL{}22$, $\GL{}32$, with \syl 2\ orders
  $1,2,2,8$. The signed vector $(1,-2,-2,8)$ of
  Corollary~\ref{cor:chiLJ} is indeed the \we\ for
  the modified table of marks of Lemma~\ref{lemma:PIPJ} as
  \begin{equation*}
    \begin{pmatrix}
      1 & 0 & 0 & 0 \\
      3 & 1 & 0 & 0 \\
      3 & 0 & 1 & 0 \\
      21 & 7 & 7 & 1 
    \end{pmatrix}
    \begin{pmatrix}
      1 \\ -2 \\ -2 \\ 8
    \end{pmatrix} =
    \begin{pmatrix}
      1 \\ 1 \\ 1 \\ 1
    \end{pmatrix}
  \end{equation*}
  By Proposition~\ref{prop:GpEuc}.\eqref{item:GpEuc1}, this \we\ for
  the modified table of marks records the negative reduced \Euc s,
  $-\rchi(\cat S{L_I}{2+*})$, of the Brown posets for the Levi
  complements.
\end{exmp}

We are now prepared to prove a version of Steinberg's
theorem valid for all parabolic subgroups of $K$.

\begin{thm}\label{thm:gensteinberg} 
  $|P_p| |O_p(P)| = |K|_p^2$ for any parabolic subgroup $P$ of $K$.
\end{thm}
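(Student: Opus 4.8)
The plan is to carry out, for the parabolic subgroup $P$ in place of $K$, the computation that turns Solomon's theorem into Steinberg's theorem: read off $|P_p|$ from the $p$-radical subgroups of $P$ via Proposition~\ref{prop:GpEuc}.\eqref{item:GpEuc1}, evaluate the summands with Corollary~\ref{cor:chiLJ}, and collapse the resulting alternating sum with Corollary~\ref{cor:solomon}. Since any parabolic subgroup of $K$ is $K$-conjugate to a standard one, and neither $|P_p|$ nor $|O_p(P)|$ changes under conjugation, I would first reduce to $P=P_I$ for some $I\subseteq\wh\Pi$, so that $O_p(P)=U_I$.

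The one step that is not formal manipulation is the identification of the $p$-radical subgroups of $P_I$, and I expect it to be the main obstacle (though a routine one). The assertion is that, up to $P_I$-conjugacy, they are exactly the $U_J$ with $J\subseteq I$, with $N_{P_I}(U_J)=P_J$. For $J\subseteq I$ one has $P_J\subseteq P_I$, hence $N_{P_I}(U_J)=N_K(U_J)\cap P_I=P_J$ by \cite[Theorem~2.6.5]{GLSIII}, and $O_p(P_J)=U_J$, so each such $U_J$ is $p$-radical in $P_I$. Conversely $O_p(P_I)=U_I$ lies in every $p$-radical subgroup $Q$ of $P_I$, and $Q\mapsto Q/U_I$ is a conjugation-equivariant bijection from the $p$-radical subgroups of $P_I$ onto those of $P_I/U_I\cong L_I$; since $L_I$ is, up to a $p'$ torus factor, a central product of groups in $\Lie(p)$, the Borel--Tits theorem \cite[Theorem~3.1.3]{GLSIII}, applied to $L_I$ in the manner of the proof of Lemma~\ref{lemma:PIPJ} (equivalently, via the Alperin--Goldschmidt family controlling fusion in $U$ inside $P_I$), identifies these with the unipotent radicals $U_J/U_I$ of the parabolic subgroups $P_J/U_I$ of $L_I$, $J\subseteq I$.

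Granting the classification, Proposition~\ref{prop:GpEuc}.\eqref{item:GpEuc1} applied to the group $P_I$ does the rest. The $P_I$-conjugacy class of $U_J$ has $|P_I:P_J|$ members and $\cat O{P_I}{}(U_J)=N_{P_I}(U_J)/U_J=P_J/U_J\cong L_J$, so
\begin{equation*}
  |(P_I)_p| \;=\; \sum_{J\subseteq I} -\rchi(\cat S{L_J}{p+*})\,|U_J|\,|P_I:P_J| .
\end{equation*}
By Corollary~\ref{cor:chiLJ}, $-\rchi(\cat S{L_J}{p+*})=(-1)^{|J|}|L_J|_p$, and since $P_J\supseteq B\supseteq U$ one has $|L_J|_p\,|U_J|=|P_J|_p=|K|_p$, whence
\begin{equation*}
  |(P_I)_p| \;=\; |K|_p\sum_{J\subseteq I}(-1)^{|J|}\,|P_I:P_J| \;=\; |K|_p\,|L_I|_p
\end{equation*}
by the first identity of Corollary~\ref{cor:solomon}. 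Finally
\begin{equation*}
  |(P_I)_p|\,|O_p(P_I)| \;=\; |K|_p\,|L_I|_p\,|U_I| \;=\; |K|_p\,|P_I|_p \;=\; |K|_p^2 ,
\end{equation*}
using once more that $P_I$ contains a Sylow $p$-subgroup of $K$.

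A self-contained shortcut for the first display, bypassing the $p$-radical classification: an element $u\ell$ of $P_I=U_I\rtimes L_I$ with $u\in U_I$, $\ell\in L_I$ is $p$-singular if and only if $\ell$ is (the image of a $p$-element is a $p$-element; conversely $\langle u\ell\rangle$ is an extension of the $p$-group $\langle\ell\rangle$ by the $p$-group $\langle u\ell\rangle\cap U_I$), so $|(P_I)_p|=|U_I|\,|(L_I)_p|$. Combined with Steinberg's theorem for the Levi subgroup $L_I$, i.e.\ $|(L_I)_p|=|L_I|_p^2$ --- a mild extension of Theorem~\ref{thm:steinberg} --- this gives the result directly; but that route trades the orbit-category input for a non-elementary fact about $L_I$, whereas the argument above uses only results already established here.
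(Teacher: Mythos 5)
Your argument is correct and is essentially the paper's own proof: the same identification of the $p$-radical subgroups of $P_I$ (up to conjugacy) as the $U_J$ with $J\subseteq I$, with $N_{P_I}(U_J)=P_J$, via \cite[Proposition~6.3]{gm:2012} and Borel--Tits, followed by the same chain Proposition~\ref{prop:GpEuc}.\eqref{item:GpEuc1}, then Corollary~\ref{cor:chiLJ}, then $|L_J|_p|U_J|=|P_J|_p=|K|_p$, then the first identity of Corollary~\ref{cor:solomon}. The semidirect-product shortcut $|(P_I)_p|=|U_I|\,|(L_I)_p|$ that you append is a valid observation but, as you note yourself, it trades the machinery already established here for Steinberg's theorem applied to the Levi subgroup, which the paper does not prove.
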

\begin{proof}
  There is always a bjiection between the $p$-radical subgroups of a
  finite group $G$ and those of $G/O_p(G)$
  \cite[Proposition~6.3]{gm:2012}. In particular, $\{U_J \mid I
  \supseteq J\}$ is a complete set of representatives for the
  $p$-radical subgroups of $P_I$ corresponding to the $p$-radical
  subgroup classes of $L_I$.
  Obviously,
  $N_{P_I}(U_J) = P_I \cap N_K(U_J) = P_I \cap P_J = P_J =N_K(U_J)$,
  $\cat O{P_I}{}(U_J) = P_J/U_J = L_J = \cat OK{}(U_J)$ and
  $|P_I : N_{P_I}(U_J)| = |P_I : P_J|$ is the length of $U_J$ in $P_I$
  and $K$. By Proposition~\ref{prop:GpEuc}.\eqref{item:GpEuc1}, the
  number of $p$-singular elements in $P_I$ is
  \begin{multline*}
    |(P_I)_p| =
    \sum_{I \supseteq J} -\rchi(\cat S{L_J}{p+*}) |P_I : P_J| |U_J| =
    \sum_{I \supseteq J} (-1)^{|J|} |P_I : P_J| |L_J|_p |U_J| =
    |K|_p\sum_{I \supseteq J} (-1)^{|J|} |P_I : P_J| \\ \stackrel{\text{Cor~\ref{cor:solomon}}}{=}
    |K|_p |L_I|_p =
    |K|_p |P_I : U_I|_p =
    |K|_p^2/|U_I|
  \end{multline*}
This finishes the proof as $U_I=O_p(P_I)$.
\end{proof}

Theorems~\ref{thm:steinberg} and \ref{thm:gensteinberg} apply e.g.\ to
$\Omega_{2m}^\pm(\F_q)$, $P\Omega_{2m}^\pm(\F_q)$,
$\Spineven{\pm}{2m}{\F_q}$ for all prime powers $q$, to
$\SOeven{\pm}{2m}{\F_{q}}$ for odd $q$,
but not to
$\SOeven{\pm}{2m}{\F_{2^e}}$ \cite[\S2.7]{GLSIII}. ($\SOeven{-}4{\F_2}
\cong \Sigma_5$ of order $8 \cdot 15$ contains $56<8^2$ $2$-singular elements.)
They also apply to $\Omega_{2m+1}(\F_q)$, $\SOodd{2m+1}{\F_q}$ and
$\Spinodd{2m+1}{\F_q}$ for all $q$.

The $q$-bracket of the natural number $d$ is the \pol\
$[d](q) = q^{d-1}+\cdots+q+1 \in \Z[q]$ of degree $d-1$ with value
$[d](1)=d$ at $q=1$. For a reflection group $W$, put
$W(q)= \prod_d [d](q)$ where the product is over the degrees $d$ of
the basic \pol\ invariants \cite[Proposition 10.2.5]{carter:lie}.  The
two identities of Corollary~\ref{cor:solomon} with $I=\Pi$ for the
Chevalley (untwisted) groups associated to $\overline{K}(\Sigma)$ with
Weyl group $W$,
\begin{equation}\label{eq:witt}
 \sum_{J \subseteq \Pi} (-1)^J \frac{W_{\Pi}(q)}{W_J(q)} =
 q^{|\Sigma^+|}, \qquad
 \sum_{J \subseteq \Pi} (-1)^J \frac{W_{\Pi}(q)}{W_J(q)} 
  q^{|\Sigma^+_J|} = 1
\end{equation}
are two $q$-analogs of Witt's identity \cite{witt1941}
\cite[(5)]{solomon66}
$ \sum\limits_{J \subseteq \Pi} (-1)^J |W_{\Pi} : W_J|=1$.

Let
$\mathrm{OP}(m)=\{(m_1,\ldots,m_k) \mid k \geq 1, m_i \geq 1,\sum
m_i=m\}$ denote the set of all the $2^{m-1}$ ordered partitions of $m$
\cite[p 14]{stanley97}.

\begin{exmp}
  Subsystems of the root systems $A_{m-1}$ or $B_{m-1}$ are
  indexed  by $\mathrm{OP}(m)$ via the bijection taking
  $(m_1,\ldots,m_k) \in \mathrm{OP}(m)$ to $A_{m_1-1} \times \cdots
  \times A_{m_k-1}$ or $A_{m_1-1} \times \cdots
  \times A_{m_{k-1}-1} \times B_{m_k-1}$ (where $A_0$ is the empty root system
  and $B_0=A_0$, $B_1=A_1$).
  The incarnations of equations \eqref{eq:witt} for the Chevalley
  groups $\SL{+}{m}{\F_q}$ and $\SOodd{2m-1}{\F_q}$  of rank $m-1$ with root systems
  $\Sigma = A_{m-1},B_{m-1}$ are the \pol\ identities 
  \begin{gather*}
    \sum
  (-1)^k \binom{[m](q)}{[m_1](q),\cdots,[m_k](q)} =(-1)^m q^{\binom
    m2}, \qquad
   \sum
  (-1)^k  \binom{[m](q)}{[m_1](q),\cdots,[m_k](q)}q^{\sum \binom{m_i}2} = (-1)^m \\
    \sum  \frac{(-1)^k\prod\limits_{d=m_k}^{m-1}[2d](q)}{[m_1]!(q)\cdots[m_{k-1}]!(q)} =(-1)^m q^{(m-1)^2}, \qquad
   \sum 
  \frac{(-1)^k\prod\limits_{d=m_k}^{m-1}[2d](q)}{[m_1]!(q)\cdots
    [m_{k-1}]!(q)} q^{\sum_{i=1}^{m_{k-1}} \binom{m_i}{2}+(m_k-1)^2}
  =(-1)^m  
\end{gather*}
The sums are indexed by all $(m_1,\ldots,m_k) \in \mathrm{OP}(m)$ and the identities for
$A_{m-1}$ use Gaussian multinomial coefficients
\cite[\S1.7]{stanley97}.
\end{exmp}

\begin{exmp}[$\SL{-}{m}{\F_q}$]  
  The two identities of Corollary~\ref{cor:solomon} with $I=\wh\Pi$
  for the Steinberg group $\SL{-}{2m}{\F_q}$ of rank $2m-1$ and
  twisted rank $m$ are
  \begin{gather*}  
 \sum
  \frac{(-1)^{k} \prod\limits_{d=1}^{2m}[d]((-1)^dq)}{[m_1]!(q^2)
    \cdots [m_k]!(q^2)} -
  \sum
  \frac{(-1)^{k} \prod\limits_{d=2m_k+2}^{2m}[d]((-1)^dq)}{[m_1]!(q^2)
    \cdots [m_{k-1}]!(q^2) } = (-1)^m q^{\binom{2m}{2}} \\
    \sum
  \frac{(-1)^{k} \prod\limits_{d=1}^{2m}[d]((-1)^dq)}{[m_1]!(q^2)
    \cdots [m_k]!(q^2)} q^{\sum \binom{m_i}{2}} -
  \sum
  \frac{(-1)^{k} \prod\limits_{d=2m_k+2}^{2m}[d]((-1)^dq)}{[m_1]!(q^2)
    \cdots [m_{k-1}]!(q^2) } q^{\sum \binom{m_i}{2}} = (-1)^m
\end{gather*}
and for the Steinberg group $\SL{-}{2m+1}{\F_q}$ of rank $2m$ and
twisted rank $m$ they are
\begin{gather*}  
 \sum
  \frac{(-1)^{k} \prod\limits_{d=1}^{2m+1}[d]((-1)^dq)}{[m_1]!(q^2)
    \cdots [m_k]!(q^2)} -
  \sum
  \frac{(-1)^{k} \prod\limits_{d=2m_k+2}^{2m+1}[d]((-1)^dq)}{[m_1]!(q^2)
    \cdots [m_{k-1}]!(q^2) } = (-1)^m q^{\binom{2m+1}{2}} \\
    \sum
  \frac{(-1)^{k} \prod\limits_{d=1}^{2m+1}[d]((-1)^dq)}{[m_1]!(q^2)
    \cdots [m_k]!(q^2)} q^{\sum \binom{m_i}{2}} -
  \sum
  \frac{(-1)^{k} \prod\limits_{d=2m_k+2}^{2m+1}[d]((-1)^dq)}{[m_1]!(q^2)
    \cdots [m_{k-1}]!(q^2) } q^{\sum \binom{m_i}{2}} = (-1)^m
\end{gather*}
where the sums run over all  $(m_1,\ldots,m_k) \in \mathrm{OP}(m)$.
These identities are obtained by analyzing the $C_2$-subsystems of the
$C_2$-root system $A_{m-1}$ \cite[13.3.8]{carter:lie}.
Write $S(A_{m-1})$ for the multiset of all
  $C_2$-subsystems of $A_{m-1}$. One subsystem of $A_{2m-1}$ is $a_{2m-1}$
  defined to be the free part of $A_{2m-1}$, i.e.\@ the subsystem
  obtained by deleting the middle root $\alpha_m$. The fundamental
  roots of the $C_2$-root systems $a_1,a_3,a_5,a_7$ are
  \begin{equation*}
    \begin{tikzpicture}
      \node at (0,0) {};
      \node[]  at (0,-1) {$\emptyset$};
      \node at (0,-.5) {$a_1$};
    \end{tikzpicture} \qquad
    \begin{tikzpicture}
      \node[circle, fill=black, inner sep=1.5pt] (1)   {};
      \node[circle, fill=black, inner sep=1.5pt, label={$a_3$}] (2) [above of=1] {};
      \draw[<->, shorten >=2pt, shorten <=2pt ] (1) to  (2); 
    \end{tikzpicture} \qquad
    \begin{tikzpicture}
      \node[circle, fill=black, inner sep=1.5pt] (1) {};
      \node[circle, fill=black, inner sep=1.5pt] (2) [right of=1] {};
      \node[circle, fill=black, inner sep=1.5pt] (3) [above of=2] {};
      \node[circle, fill=black, inner sep=1.5pt] (4) [above of=1] {};
      \draw[<->, shorten >=2pt, shorten <=2pt] (1) to (4);
      \draw[<->, shorten >=2pt, shorten <=2pt] (2) to (3);
      \draw[-,thick] (1)--(2);
      \draw[-,thick] (3)-- node[above] {$a_5$}  (4);
    \end{tikzpicture} \qquad
    \begin{tikzpicture}
      \node[circle, fill=black, inner sep=1.5pt] (1) {};
      \node[circle, fill=black, inner sep=1.5pt] (2) [right of=1] {};
      \node[circle, fill=black, inner sep=1.5pt] (3) [right of=2] {};
      \node[circle, fill=black, inner sep=1.5pt] (4) [above of=3] {};
      \node[circle, fill=black, inner sep=1.5pt, label={$a_7$}] (5) [above of=2] {};
      \node[circle, fill=black, inner sep=1.5pt] (6) [above of=1] {};
      \draw[<->, shorten >=2pt, shorten <=2pt] (1) to (6);
      \draw[<->, shorten >=2pt, shorten <=2pt] (2) to  (5);
      \draw[<->, shorten >=2pt, shorten <=2pt] (3) to  (4);
      \draw[-,thick] (1)--(2);
      \draw[-,thick] (2)--(3);
      \draw[-,thick] (4)--(5);
      \draw[-,thick] (5)--(6);
    \end{tikzpicture}
  \end{equation*}
  The first multisets of subsystems  are $S(A_1)=\{a_1,A_1\}$,
  $S(A_2)=\{a_1,A_2\}$,
  $S(A_3)=\{a_1,A_1,a_3,A_3\} = a_1 \times S(A_1) \cup
  \{a_3,A_3\}$, $S(A_4)=\{a_1,A_2,a_3,A_4\} = a_1 \times S(A_2) \cup
  \{a_3,A_4\}$.  In general, the $2^m$ subsystems of $A_{2m-1}$ and
  $A_{2m}$, 
  $m\geq 2$, are the multisets
  \begin{align*}
    S(A_{2m-1}) &= a_1 \times S(A_{2m-3}) \cup \cdots \cup a_{2i-1}
    \times S(A_{2(m-i)-1}) \cup \cdots \cup a_{2m-3} \times S(A_1)
      \cup \{a_{2m-1},A_{2m-1}\} \\
    S(A_{2m}) &= a_1 \times S(A_{2m-2}) \cup \cdots \cup a_{2i-1}
      \times S(A_{2(m-i)}) \cup \cdots \cup a_{2m-3} \times S(A_2)
      \cup \{a_{2m-1},A_{2m}\}
  \end{align*}
  For each subsystem $a$ of $A_m$, let $P(a)(q) =|P : B|\in \Z[q]$ be
  the index of the Borel subgroup $B$ in the parabolic subgroup of
  $\SL{-}{m+1}{\F_q}$ corresponding to $a$. In particular, $P(A_m)(q)$
  and $P(a_{2m-1})(q)$ are the \pol s
\begin{equation*}
  P(A_m)(q)= \prod_{1 \leq d \leq m+1} [d]((-1)^dq), \quad
  P(a_{2m-1}) = \prod_{1 \leq d \leq m} [d](q^2) = [m]!(q^2)
  \qquad m \geq 1
\end{equation*}
of degrees $\binom{m+1}{2}$ and $\binom{m}{2}$.
Consider the multiset of signed \pol s associated to all subsystems of $A_m$ 
\begin{equation*}
  P(S(A_m)) =\{ (-1)^{|\Pi(a)/C_2|} P(a)(q) \mid a \in S(A_m) \}
\end{equation*}
where $\Pi(a)$ is the set of fundamental roots and $\Pi(a)/C_2$ the
orbit set.  Then $P(S(A_1)) = \{1,-P(A_1)\}$,
$P(S(A_2)) = \{1,-P(A_2)\}$ and one may now determine the multisets of
\pol s for all the $C_2$-root systems $A_{2m-1}$ and $A_{2m}$,
$m \geq 2$. This leads to the above \pol\ identities.
\end{exmp}

Steinberg's theorem applies in the equicharacteristic case and does
not hold in the cross-characteristic case.  However, it is known that
the number of $p$-singular classes in $\GL {}nq$, $p \nmid q$, is
\begin{equation*}  
  | \GL {}nq_p / \GL{}nq | = \frac{1}{n!}
  \sum_{\lambda \vdash n} T(\lambda) \prod_{b \in \lambda} (q^b-1)_p
\end{equation*}
where $\lambda$ ranges over all partitions of $n$ and $T(\lambda)$ is
the number of permutations of cycle type $\lambda$ in $\Sigma_n$ \cite[Corollary~4.22]{jmm:eulergl+}.
The number of $p$-singular classes, but not the number of $p$-singular
elements, in $\GL{}nq$, $p \nmid q$, depends only on the $p$-fusion system.

\section*{Acknowledgments}
\label{Acknowledgments}
I thank the Department of Mathematics at the Universitat Aut\`onoma
Barcelona for kind hospitality and for the opportunity to speak in the
Topology Seminar. 
Lemma~\ref{lemma:Gp} was pointed out by Sune Precht Ree.  Warm thanks also go to Bob Oliver for his
interest and Justin Lynd for referring me to Steinberg's paper
\cite{steinberg68}. This note would not have been written without
their help and encouragement.


\begin{thebibliography}{10}

\bibitem{brown75}
Kenneth~S. Brown, \emph{Euler characteristics of groups: the {$p$}-fractional
  part}, Invent. Math. \textbf{29} (1975), no.~1, 1--5. \MR{0385008 (52
  \#5878)}

\bibitem{brown-thevenaz88}
Kenneth~S. Brown and Jacques Th\'{e}venaz, \emph{A generalization of {S}ylow's
  third theorem}, J. Algebra \textbf{115} (1988), no.~2, 414--430. \MR{943266}

\bibitem{burnside55}
W.~Burnside, \emph{Theory of groups of finite order}, Dover Publications Inc.,
  New York, 1955, 2d ed. \MR{0069818 (16,1086c)}

\bibitem{carter:lie}
Roger~W. Carter, \emph{Simple groups of {L}ie type}, Wiley Classics Library,
  John Wiley \& Sons Inc., New York, 1989, Reprint of the 1972 original, A
  Wiley-Interscience Publication. \MR{90g:20001}

\bibitem{curtis-lehrer-tits80}
C.~W. Curtis, G.~I. Lehrer, and J.~Tits, \emph{Spherical buildings and the
  character of the {S}teinberg representation}, Invent. Math. \textbf{58}
  (1980), no.~3, 201--210. \MR{571572}

\bibitem{cr}
Charles~W. Curtis and Irving Reiner, \emph{Representation theory of finite
  groups and associative algebras}, AMS Chelsea Publishing, Providence, RI,
  2006, Reprint of the 1962 original. \MR{2215618 (2006m:16001)}

\bibitem{frobenius:1907}
G.~Frobenius, \emph{{\"U}ber einen {F}undamentalsatz der {G}ruppentheorie,
  {I}{I}}, Sitzungsberichte der Preussischen Akademie Weissenstein (1907),
  428--437.

\bibitem{gm:2012}
Matthew Gelvin and Jesper~M. M{\o}ller, \emph{Homotopy equivalences between
  {$p$}-subgroup categories}, J. Pure Appl. Algebra \textbf{219} (2015), no.~7,
  3030--3052. \MR{3313517}

\bibitem{GLSII}
Daniel Gorenstein, Richard Lyons, and Ronald Solomon, \emph{The classification
  of the finite simple groups. {N}umber 2. {P}art {I}. {C}hapter {G}},
  Mathematical Surveys and Monographs, vol.~40, American Mathematical Society,
  Providence, RI, 1996, General group theory. \MR{MR1358135 (96h:20032)}

\bibitem{GLSIII}
\bysame, \emph{The classification of the finite simple groups. {N}umber 3.
  {P}art {I}. {C}hapter {A}}, Mathematical Surveys and Monographs, vol.~40,
  American Mathematical Society, Providence, RI, 1998, Almost simple
  $K$-groups. \MR{MR1490581 (98j:20011)}

\bibitem{HIO89}
T.~Hawkes, I.~M. Isaacs, and M.~{\"O}zaydin, \emph{On the {M}\"obius function
  of a finite group}, Rocky Mountain J. Math. \textbf{19} (1989), no.~4,
  1003--1034. \MR{MR1039540 (90k:20046)}

\bibitem{isaacsrobinson}
I.~M. Isaacs and G.~R. Robinson, \emph{On a theorem of {F}robenius: solutions
  of {$x^n=1$} in finite groups}, Amer. Math. Monthly \textbf{99} (1992),
  no.~4, 352--354. \MR{1157226 (93a:20034)}

\bibitem{jmm_mwj:2010}
Martin~Wedel Jacobsen and Jesper~M. M{\o}ller, \emph{Euler characteristics and
  {M}\"obius algebras of {$p$}-subgroup categories}, J. Pure Appl. Algebra
  \textbf{216} (2012), no.~12, 2665--2696. \MR{2943749}

\bibitem{leinster08}
Tom Leinster, \emph{The {E}uler characteristic of a category}, Doc. Math.
  \textbf{13} (2008), 21--49,
  \href{http://www.math.uiuc.edu/documenta/vol-13/02.pdf}{Doc. Math.}
  \MR{MR2393085}

\bibitem{jmm:eulergl+}
Jesper~M. {M{\o}ller}, \emph{{Equivariant Euler characteristics of subspace
  posets}}, ArXiv e-prints (2017).

\bibitem{kylepetersen2015}
T.~Kyle Petersen, \emph{Eulerian numbers}, Birkh\"{a}user Advanced Texts:
  Basler Lehrb\"{u}cher. [Birkh\"{a}user Advanced Texts: Basel Textbooks],
  Birkh\"{a}user/Springer, New York, 2015, With a foreword by Richard Stanley.
  \MR{3408615}

\bibitem{quillen78}
Daniel Quillen, \emph{Homotopy properties of the poset of nontrivial
  {$p$}-subgroups of a group}, Adv. in Math. \textbf{28} (1978), no.~2,
  101--128. \MR{MR493916 (80k:20049)}

\bibitem{serre77}
Jean-Pierre Serre, \emph{Linear representations of finite groups},
  Springer-Verlag, New York, 1977, Translated from the second French edition by
  Leonard L. Scott, Graduate Texts in Mathematics, Vol. 42. \MR{MR0450380 (56
  \#8675)}

\bibitem{solomon66}
Louis Solomon, \emph{The orders of the finite {C}hevalley groups}, J. Algebra
  \textbf{3} (1966), 376--393. \MR{0199275}

\bibitem{stanley97}
Richard~P. Stanley, \emph{Enumerative combinatorics. {V}ol. 1}, Cambridge
  Studies in Advanced Mathematics, vol.~49, Cambridge University Press,
  Cambridge, 1997, With a foreword by Gian-Carlo Rota, Corrected reprint of the
  1986 original. \MR{MR1442260 (98a:05001)}

\bibitem{stanley99}
\bysame, \emph{Enumerative combinatorics. {V}ol. 2}, Cambridge Studies in
  Advanced Mathematics, vol.~62, Cambridge University Press, Cambridge, 1999,
  With a foreword by Gian-Carlo Rota and appendix 1 by Sergey Fomin.
  \MR{1676282 (2000k:05026)}

\bibitem{steinberg68}
Robert Steinberg, \emph{Endomorphisms of linear algebraic groups}, Memoirs of
  the American Mathematical Society, No. 80, American Mathematical Society,
  Providence, R.I., 1968. \MR{MR0230728 (37 \#6288)}

\bibitem{webb87}
P.~J. Webb, \emph{A local method in group cohomology}, Comment. Math. Helv.
  \textbf{62} (1987), no.~1, 135--167. \MR{882969 (88h:20065)}

\bibitem{witt1941}
Ernst Witt, \emph{Spiegelungsgruppen und {A}ufz\"{a}hlung halbeinfacher
  {L}iescher {R}inge}, Abh. Math. Sem. Hansischen Univ. \textbf{14} (1941),
  289--322. \MR{0005099}

\end{thebibliography}

\def\cprime{$'$} \def\cprime{$'$} \def\cprime{$'$} \def\cprime{$'$}
  \def\cprime{$'$}
\providecommand{\bysame}{\leavevmode\hbox to3em{\hrulefill}\thinspace}
\providecommand{\MR}{\relax\ifhmode\unskip\space\fi MR }
\providecommand{\MRhref}[2]{%
  \href{http://www.ams.org/mathscinet-getitem?mr=#1}{#2}
}
\providecommand{\href}[2]{#2}

\end{document}